\documentclass[12pt]{amsart}

\parindent=0pt

\textwidth=35truepc
\textheight=8.6truein
\addtolength{\parskip}{\medskipamount}
\calclayout

\IfFileExists{mathrsfs.sty}{\usepackage{mathrsfs}}
{\IfFileExists{euscript.sty}{\usepackage[mathscr]{euscript}}
{\let\mathscr\mathcal}}

\DeclareFontEncoding{OT2}{}{} 
 \newcommand{\textcyr}[1]{%
   {\fontencoding{OT2}\fontfamily{wncyr}\fontseries{m}\fontshape{n}%
    \selectfont #1}}
\newcommand{\Sha}{{\mbox{\textcyr{Sh}}}}

\newcommand{\Z}{\mathbb{Z}}
\newcommand{\legendre}[2]{\left(\frac{#1}{#2}\right)}
\newcommand{\Q}{\mathbb{Q}}
\newcommand{\A}{\mathbb{A}}
\newcommand{\C}{\mathbb{C}}

\newcommand{\X}{\mathscr X}
\newcommand{\Gal}{\operatorname{Gal}}
\newcommand{\rk}{\operatorname{rank\,}_{\Z}}
\newcommand{\ord}{{\operatorname{ord}}}
\newcommand{\f}{\mathfrak f_{\chi}} 
\newcommand{\fpsi}{\mathfrak f_{\psi}}

\newcommand{\End}{\operatorname{End}}
\newcommand{\Tr}{\operatorname{Tr}}
\newcommand{\Ker}{\operatorname{Ker}}

\newcommand{\kbar}{{\bar k}}
\newcommand{\isom}{\cong}
\newcommand{\ph}{\varphi}
\newcommand{\<}{\langle}
\renewcommand{\>}{\rangle}
\renewcommand{\P}{\mathbb{P}}
\newcommand{\N}{\mathfrak{F}}

\newtheorem{thm}{Theorem}[section] 
\newtheorem{prop}[thm]{Proposition}
\newtheorem{lem}[thm]{Lemma}
\newtheorem{cor}[thm]{Corollary}
\theoremstyle{definition}

\newtheorem{rem}[thm]{Remark}
\numberwithin{equation}{section}

\title[Vanishing and Non-Vanishing Dirichlet Twists]%
{Vanishing and Non-Vanishing Dirichlet Twists of $L$-Functions of Elliptic Curves}
\thanks{${}^{\dagger}$This work was supported in part by grants from NSERC and FCAR}
\date{}

\author[J.~Fearnley]{Jack Fearnley}
\address[J.~Fearnley]%
{Department of Mathematics and Statistics and CICMA\\
     Concordia University \\
     1455 de Maisonneuve  Blvd. West\\
     Montr\'eal, Quebec, H3G 1M8, CANADA}
\email{jack@mathstat.concordia.ca}
\author[H.~Kisilevsky]{Hershy Kisilevsky$^{\dagger}$}
\address[H.~Kisilevsky]%
{Department of Mathematics and Statistics and CICMA\\
     Concordia University \\
     1455 de Maisonneuve  Blvd. West\\
     Montr\'eal, Quebec, H3G 1M8, CANADA}
\email{kisilev@mathstat.concordia.ca}
\author[M.~Kuwata]{Masato Kuwata}
\address[M.~Kuwata]%
{Faculty of Economics \\
Chuo University\\
Hachioji-shi, Tokyo 192-0393,  Japan}
\email{kuwata@tamacc.chuo-u.ac.jp}

\begin{document}

\begin{abstract}
Let $L(E/\Q,s)$ be the $L$-function of an elliptic curve $E$
defined over the rational field $\Q$. We examine the vanishing
and non-vanishing of the central values $L(E,1,\chi)$ of the
twisted $L$-function as $\chi$ ranges over Dirichlet characters
of given order.
\end{abstract}

\maketitle

\section{Introduction.}

Let  $E$ be an elliptic curve defined over the field $\Q$.
Denote by
\[
L(E/\Q,s) =L(E,s)= \sum_{n\geq 1}a_n n^{-s}
\]
its $L$-function.

By the proof \cite{B-C-D-T},\cite{Ta-Wi} of the modularity of
elliptic curves over $\Q$, we know that  $L(E,s)$ has an
analytic continuation for all $s \in \C$, and satisfies the
functional equation
\[
\Lambda(E,s)= w_E \Lambda(E,2-s),
\]
where $\Lambda(E,s)=({\sqrt{N_E}/2\pi})^s\Gamma(s)L(E,s)$,
$N_E$ is the conductor of $E/\Q$, and $w_E=\pm 1$.

For a primitive Dirichlet character $\chi$ of conductor
$\f$, the {\it twist} of $L(E,s)$ by $\chi$ is
\[
L(E,s,\chi) = \sum_{n\geq 1}\chi(n)a_n n^{-s}.
\]

Then we also know that 
the $L$-function $L(E,s,\chi)$ has 
an analytic continuation and if $\f$ is coprime to $N_E$,
satisfies the functional equation
\[
\Lambda(E,s,\chi)=
w_E\chi({N_E})\tau(\chi)^2\f^{-1}\Lambda(E,2-s, \overline\chi),
\]
where
$
\Lambda(E,s,\chi)=({\f\sqrt{N_E}/2\pi})^s\Gamma(s)L(E,s,\chi)
$ and $\tau(\chi)$ is the Gauss sum 
\[
\tau(\chi)=\sum_{c=0}^{\f-1}\chi(c)\exp{(2\pi ic/\f)}.
\]

We consider the question of the vanishing or non-vanishing 
of $L(E,s,\chi)$ at $s=1$ as $\chi$ ranges over sets of 
Dirichlet characters of fixed order. For integers $n\geq 1$, let $\X(n)$ 
denote the set of primitive Dirichlet 
characters of order exactly equal to $n$, i.e.
\[
\X(n)=\{\chi \vert \chi^{n}=\chi_0 {\text{ \ and\ }} \chi^{d} \neq
\chi_0 {\text{ \  for $d<n$\ }} \}.
\]
where $\chi_0$ is the trivial character.

Given $n$ a positive integer and $X >0$, we consider  
\[
\N_E(n,X)=\N^1_E(n,X)=\#\{\chi \in \X(n) \mid \f\leq X {\text{ \ and\ }}L(E,1,\chi)=0 \},
\]
or more generally
\[
\N^r_E(n,X)=\#\{\chi \in \X(n) \mid \f\leq X {\text{ \ and\ }} \ord_{s=1}L(E,s,\chi)\geq r \}.
\]
These functions have been extensively studied in the case that
$n=2$, and there are many results and conjectures that
describe $\N^r_E(2,X)$ as $X \to \infty$. Some of these will be
reviewed in \S 2.

Given a Dirichlet character $\chi$, let $K_{\chi}$ be the cyclic
extension of $\Q$ (of conductor $\f$) which corresponds to $\chi$.  We write
$K_{\chi}=K$ when the character $\chi$ is understood.

The Birch \& Swinnerton-Dyer conjecture equates the order of vanishing of
$L(E,s)$ at $s=1$ to the $\Z$-rank of the Mordell-Weil group
$E(\Q)$. More generally (see \cite{Ro}), the order of vanishing
of $L(E,s,\chi)$ at $s=1$ is conjectured to be the rank of the
``$\chi$-component'' $E(K)^{\chi}$ of $E(K)$, where
$K=K_{\chi}$.  Here $\rk E(K)^{\chi}= \dim_{\C} \bigl(\C\otimes
E(K)\bigr)^{\chi}$ is the dimension of the $\chi$ eigenspace of
$\C\otimes_{\Z} E(K)$ as a $\Gal (\overline \Q/\Q)$-space.

The algebro-geometric version of vanishing (resp. non-vanishing)
of  $L(E,1,\chi)$ is whether the  $\chi$-component $E(K)^{\chi}$ of $E(K)$ 
has positive rank (resp. $\rk E(K)^{\chi}=0$)  
as $K_{\chi}$ ranges over the corresponding cyclic extensions of $\Q$.
This amounts to asking whether or not 
$\rk E(K_{\chi})>\rk E(F)$ for all proper subfields $F\subset K_{\chi}.$

We rely on Kato's important result \cite{Kato}
generalizing Kolyvagin's theorem \cite{Ko} which asserts that
if the $\chi$-component of $E(K_{\chi})$ has positive rank, then
$L(E,1,\chi)=0$ (see Scholl \cite{Sch}.)

Suppose that $\chi$ is a character of prime order $\ell$,
$K=K_{\chi}$ is the field corresponding to $\chi$, and let
$V=E(K)\otimes_{\Z}\Q$. Then $V$ is a representation space for
$G=\Gal(K/\Q)$ with $\dim_{\Q} V = \rk E(K)$. Since $G$ is a
cyclic group of prime order, the $\Q$-irreducible characters of
$G$ are the trivial character $\chi_0$ and an irreducible of
degree $\ell-1$ containing all the conjugates of $\chi$. Hence
if $\rk E(K)>\rk E(\Q)$, then the $\chi^j$-component of $E(K)$
has positive rank for each $j=1,\ldots,\ell-1$. It follows from Kato's
theorem (Kolyvagin \cite{Ko}, if $\ell=2$) that if $\rk E(K)^{\chi}>0$
 for a non-trivial character of prime order
$\ell$, then $L(E,1,\chi^j)=0$ for each $j=1,\ldots,\ell-1$.
In this context, it will follow from a modular symbol computation in
\S 3 that if $L(E,1,\chi)=0$ for a single character
of order $\ell$ then $L(E,1,\chi^j)=0$ for all $j=1,\ldots,\ell-1$.)

In this paper we consider the case $\ell\geq 3$. 
Our main Theorems (proved in \S 3, \S 5 and \S 6) are:

{\renewcommand{\thethm}{A}
\begin{thm}\label{thm:C}
If $L(E,1)\neq 0$, then for all but a finite number of 
{\it primes \/} $\ell$, the number of non-vanishing twists by 
Dirichlet characters of 
order $\ell$ and prime conductor satisfies
\[
\#\{\chi \in {\X}(\ell)\mid \f=p {\text{ \ prime\ } } 
<X, L(E,1,\chi) \neq 0\} \gg X/\log X. 
\]
\end{thm}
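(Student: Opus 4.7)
The plan is to translate the vanishing of $L(E,1,\chi)$ into a congruence on $a_p \bmod \ell$ via a modular-symbol identity, and then count primes by Chebotarev.

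\emph{Step 1 (modular-symbol encoding).} Let $m(a/p) = [a/p]^+_E/\Omega_E^+$ denote the plus part of the Manin modular symbol, taking values in $c_E^{-1}\Z$ for an integer $c_E$ depending only on $E$. For a primitive character $\chi$ of conductor $p$, the Birch--Stevens formula expresses $L(E,1,\chi)/\Omega_E^+$ as a nonzero multiple of
\[
T_\chi = \sum_{a=1}^{p-1}\bar\chi(a)\,m(a/p) \in c_E^{-1}\Z[\zeta_\ell],
\]
so $L(E,1,\chi)=0$ if and only if $T_\chi=0$. The Galois action $\zeta_\ell\mapsto\zeta_\ell^j$ sends $T_\chi$ to $T_{\chi^j}$, so all $\ell-1$ characters of order $\ell$ and conductor $p$ vanish simultaneously; it therefore suffices to exhibit $\gg X/((\ell-1)\log X)$ primes $p\le X$ with $p\equiv 1\pmod\ell$ for which $L(E,1,\chi)\neq 0$.

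\emph{Step 2 (mod $\ell$ forcing).} Fix such $p$ and let $H\subset(\Z/p\Z)^\times$ be the unique subgroup of index $\ell$. Orthogonality gives
\[
\ell\sum_{a\in H}m(a/p) = T_0 + \Tr_{\Q(\zeta_\ell)/\Q}T_\chi,\qquad T_0 := \sum_{a=1}^{p-1}m(a/p).
\]
Since $[0]^+_E$ is a Hecke eigenvector with eigenvalue $a_p$, a short computation using the Manin relations yields $T_0=(a_p-2)L^{\mathrm{alg}}(E,1)$, where $L^{\mathrm{alg}}(E,1):=L(E,1)/\Omega_E^+\in\Q^\times$. Let $S_E$ denote the finite set of primes at which $c_E\cdot L^{\mathrm{alg}}(E,1)$ fails to be an $\ell$-adic unit, together with the (finite, by Serre's open image theorem) set of $\ell$ at which the residual representation $\bar\rho_\ell\colon G_\Q\to GL_2(\mathbb{F}_\ell)$ is not surjective. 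For $\ell\notin S_E$, assuming $T_\chi=0$ forces $\Tr(T_\chi)=0$, so $T_0\in\ell\cdot c_E^{-1}\Z$; the $\ell$-adic unit-ness of $c_EL^{\mathrm{alg}}(E,1)$ then implies $a_p\equiv 2\pmod\ell$, i.e., $\mathrm{Frob}_p$ is unipotent in $\bar\rho_\ell$.

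\emph{Step 3 (Chebotarev count).} The unipotent elements of $GL_2(\mathbb{F}_\ell)$ number $\ell^2$ in a group of order $\ell(\ell-1)^2(\ell+1)$, so by Chebotarev the density of primes with unipotent $\mathrm{Frob}_p$ modulo $\ell$ is $\ell/((\ell-1)^2(\ell+1))$, and such primes automatically satisfy $p\equiv 1\pmod\ell$. Subtracting from the density $1/(\ell-1)$ of primes $\equiv 1\pmod\ell$ yields
\[
\#\{p\le X\mid p\equiv 1\pmod\ell,\ L(E,1,\chi)\neq 0\}\gg\frac{X}{(\ell-1)\log X},
\]
and multiplying by the $\ell-1$ characters per such $p$ gives the claimed $\gg X/\log X$.

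The main difficulty lies in the explicit modular-symbol calculation of Step 2, in particular deriving $T_0=(a_p-2)L^{\mathrm{alg}}(E,1)$ from the Hecke eigenvector property and pinning down the integral structure of $T_\chi$ precisely enough to make the mod $\ell$ reduction rigorous. The remaining ingredients—the Galois-orbit reduction in Step 1, the open-image theorem, and the Chebotarev count—are then routine.
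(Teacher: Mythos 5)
Your argument is essentially the paper's: your identity $T_0=(a_p-2)L(E,1)/\Omega^{+}$ is the paper's Hecke-operator computation $\Sigma_{p}(p)=(a_p-\delta(p)-1)\Sigma_{1}(1)$, your orthogonality/trace step is an equivalent packaging of the paper's reduction modulo a prime $\mathfrak l\mid\ell$ of $\Q(\zeta_{\ell})$ (namely $L^{\text{alg}}(E,1,\chi)\equiv(a_p-\delta(p)-1)L^{\text{alg}}(E,1)\bmod\mathfrak l$), and your concluding count of primes $p\equiv1\bmod\ell$ with $a_p\not\equiv2\bmod\ell$ is the paper's appeal to Serre's theorem plus Chebotarev, together with the same observation that each such $p$ carries $\ell-1$ simultaneously non-vanishing (Galois-conjugate) characters.

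One step would fail as written: you define $S_{E}$ using the set of $\ell$ at which $\overline{\rho}_{\ell}$ is not surjective and declare it finite by the open image theorem, but that theorem covers only non-CM curves. If $E$ has complex multiplication, $\overline{\rho}_{\ell}$ is never surjective for odd $\ell$, so your $S_{E}$ is the set of all primes and your proof says nothing about such $E$, which the theorem does cover (e.g.\ $y^{2}=x^{3}-x$ has $L(E,1)\neq0$). The repair is cheap and should be made explicit: you never need surjectivity, only that the image of $\overline{\rho}_{\ell}$ contains an element of determinant $1$ and trace $\not\equiv2\bmod\ell$; Chebotarev then gives a positive density of primes $p\equiv1\bmod\ell$ with $a_p\not\equiv2\bmod\ell$, which is all the count requires (the implied constant may depend on $\ell$ and $E$, so your exact density bookkeeping is not needed). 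In the CM case Serre's results place the image inside the normalizer of a Cartan subgroup with bounded index, and determinant-one Cartan elements of trace $\neq2$ are plentiful for large $\ell$ (split case: $\mathrm{diag}(x,x^{-1})$ with $x\neq1$; non-split case: norm-one elements $\neq1$), so the required density persists. A final harmless point: for the finitely many $p\mid N_E$ the factor is $a_p-1$ rather than $a_p-2$, which does not affect the asymptotic count.
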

}

{\renewcommand{\thethm}{B}
\begin{thm}\label{thm:A}
If there is at least one character 
$\chi_1 \in {\X}(3)$ or $\chi_1=\chi_0,$  such
that $E(K_{\chi_1})$ is infinite, then there are infinitely many
cubic characters $\chi \in {\X}(3)$ such that 
\(
L(E,1,\chi)=0.
\)
\end{thm}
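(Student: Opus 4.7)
The plan is to parlay the single instance guaranteed by the hypothesis into an infinite family of cyclic cubic fields over which $E$ gains rank, and then invoke Kato's theorem (recalled in the excerpt) to conclude vanishing of the corresponding twisted $L$-values.

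First I would unpack the hypothesis. Set $K=K_{\chi_1}$ and fix $P\in E(K)$ of infinite order. When $\chi_1\neq\chi_0$, projecting by an idempotent of the group algebra $\Q[\Gal(K/\Q)]$, one may take $P$ in the $\chi_1\oplus\overline{\chi_1}$-isotypic summand of $E(K)\otimes\Q$, so in particular $\Tr_{K/\Q}P=0$. When $\chi_1=\chi_0$, one has $P\in E(\Q)$. By the argument rehearsed earlier in the excerpt, the theorem reduces to producing infinitely many cyclic cubic extensions $K'/\Q$ with $\rk E(K') > \rk E(\Q)$; Kato's theorem then forces $L(E,1,\chi) = 0$ for each cubic character $\chi$ associated to $K'$.

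Next I would set up a one-parameter algebraic family. Parametrize cyclic cubic extensions of $\Q$ by a smooth rational base $T$ (for example by Shanks's simplest cubics $x^{3}-tx^{2}-(t+3)x-1$), let $\mathcal{K}/T$ denote the universal cubic, and form the relative abelian surface $\mathcal{A}=\operatorname{Res}_{\mathcal{K}/T}(E)/E$. A $\Q$-rational point on a fibre $\mathcal{A}_{t}$ records a new class in $E(K_{t})/E(\Q)$, so in the case $\chi_1\neq\chi_0$ the hypothesis supplies one such point $P$ lying over a specific $t_0\in T(\Q)$. In the case $\chi_1=\chi_0$ there is no free point on $\mathcal{A}$; instead, one would use the rational point $P\in E(\Q)$ and the arithmetic of the triplication map $[3]:E\to E$, whose preimages generate degree-$9$ extensions of $\Q$ that occasionally have a cyclic cubic subfield, to plant rational points on $\mathcal{A}$.

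The main obstacle is to upgrade the isolated data to an infinite algebraic family. Concretely, one would try to exhibit a rational or positive-rank elliptic curve $C\subset\mathcal{A}$ passing through the known point, possibly after translating by $E[3]$ or by pulling back along a modular-curve parametrization of $E$; each $\Q$-rational point of $C$ then yields a cubic field $K_t$ together with a new class in $E(K_t)/E(\Q)$. Once $C$ is found, Silverman's specialization theorem ensures that all but finitely many such specializations give points of infinite order in $\mathcal{A}_t(\Q)$, and Hilbert irreducibility guarantees that the $K_t$ are pairwise non-isomorphic outside a thin set, producing the required infinitude of cubic characters with $L(E,1,\chi)=0$.
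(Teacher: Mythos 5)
Your overall reduction is the same as the paper's: produce infinitely many cyclic cubic extensions $K'/\Q$ with $\rk E(K')>\rk E(\Q)$ and then quote Kato's theorem. The problem is that the step which constitutes the entire content of the theorem --- turning the single point supplied by the hypothesis into infinitely many such fields --- is not proved. You write that ``one would try to exhibit a rational or positive-rank elliptic curve $C\subset\mathcal{A}$ passing through the known point, possibly after translating by $E[3]$ or by pulling back along a modular-curve parametrization,'' but no construction is given and there is no reason such a curve should exist in the Weil-restriction family $\mathcal{A}=\operatorname{Res}_{\mathcal{K}/T}(E)/E$; a single rational point on one fibre of a family of abelian surfaces gives no purchase on the other fibres. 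There are also problems upstream of this: the Shanks family $x^{3}-tx^{2}-(t+3)x-1$ does not parametrize all cyclic cubic fields, so the field $K_{\chi_1}$ from the hypothesis need not occur as a fibre $\mathcal{K}_{t_0}$ at all, and in the case $\chi_1=\chi_0$ the suggestion that preimages under $[3]$ ``occasionally have a cyclic cubic subfield'' is unsubstantiated (for generic $P$ the field $\Q(\tfrac13 P)$ carries no cyclic cubic subfield in any controlled way, and nothing is said about rank growth or about getting infinitely many distinct fields). Finally, Hilbert irreducibility by itself does not give pairwise non-isomorphic specializations; some argument is needed there too.

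For comparison, the paper's mechanism is entirely different and is what fills this gap. It works on the quotient surface $\overline{S}_{E}=\widehat D/\mathfrak{A}_{3}$, where $\widehat D=\{(P,Q,R)\in E^{3}\mid P+Q+R=O\}$: a rational point of $\overline{S}_{E}$ is exactly either a rational collinear triple or a Galois-stable trace-zero triple over a cyclic cubic field (Lemma~\ref{prop:rat_pts_on_SE}). The hypothesis then produces not one point but a Zariski dense set of rational points, namely $[mP,nP,-(m+n)P]$ if $P\in E(\Q)$, or $[nP,nP^{\sigma},nP^{\sigma^{2}}]$ in the cubic case, after checking that $P$ and $P^{\sigma}$ are $\End_{k}(E)$-independent (the exceptional CM-by-$\Q(\sqrt{-3})$ situation is handled separately via the explicit section $\gamma_{1}$ of the Jacobian fibration in Proposition~\ref{prop:jacobian}). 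The surface is a $K3$ with an elliptic fibration $(t,u,\delta)\mapsto t$; Lemma~\ref{lem:zariski_dense} (Jacobian fibration plus Merel's theorem) converts Zariski density into infinitely many fibres with infinitely many rational points, and Lemma~\ref{lem:rank_increase} (a genus-$3$ curve plus Faltings) shows that a single such good fibre already yields infinitely many distinct cyclic cubic fields, with Merel again guaranteeing the resulting points have infinite order outside finitely many cases. Your proposal contains no analogue of any of these steps, so as it stands it is a plan rather than a proof.
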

}

{\renewcommand{\thethm}{C}
\begin{thm}\label{thm:B}
Let $E/\Q$ be an elliptic curve with at least 6
rational points. Then there exist
infinitely many $\chi \in {\X}(3)$ such that the rank of the
Mordell-Weil group $\rk E(K_{\chi}) > \rk E(\Q).$ As a
consequence, there are infinitely many cubic characters $\chi
\in {\X}(3)$ such that $L(E,1,\chi)=0$.
\end{thm}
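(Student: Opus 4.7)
The plan is to reduce Theorem~C to producing infinitely many cyclic cubic fields $K/\Q$ with $\rk E(K)>\rk E(\Q)$; once that is in hand, the machinery invoked at the end of Section~1 (in particular Kato's theorem) forces $L(E,1,\chi)=0$ for each nontrivial cubic character $\chi$ cutting out such a $K$. I would construct the required fields by a classical conic-pencil argument. Fix three of the six rational points, $P_1,P_2,P_3\in E(\Q)$, and embed $E$ as a plane cubic via the linear system $|P_1+P_2+P_3|$. A generic conic $C$ through the three base points meets $E$ in $P_1,P_2,P_3$ together with three further points $Q_1,Q_2,Q_3$ satisfying
\[
Q_1+Q_2+Q_3 \,=\, -(P_1+P_2+P_3)\ \in\ E(\Q)
\]
under the group law, and defined over a cubic extension of $\Q$---namely the splitting field of the cubic polynomial whose roots are, say, the $x$-coordinates of the $Q_i$.

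Next I would restrict to a rational line in the $\P^2$ of conics through $P_1,P_2,P_3$, obtaining a pencil $\{C_s\}_{s\in\P^1}$. The discriminant $\Delta(s)$ of the associated cubic has degree at most four in the pencil parameter, so the auxiliary curve $C_\Delta\colon u^2=\Delta(s)$ has genus at most one. Each rational point $(s_0,u_0)$ of $C_\Delta$ with $u_0\neq 0$ gives a cyclic cubic field $K_{s_0}$ together with points $Q_i(s_0)\in E(K_{s_0})$. Writing $\sigma$ for a generator of $\Gal(K_{s_0}/\Q)$ and $\zeta$ for a primitive cube root of unity, the combination
\[
Q_1(s_0) + \zeta\,\sigma Q_1(s_0) + \zeta^2\,\sigma^2 Q_1(s_0)
\]
is a nonzero element of $E(K_{s_0})\otimes\C$ (the $Q_i(s_0)$ are not rational) lying in the $\chi$-eigenspace for a nontrivial cubic character $\chi$, forcing $\rk E(K_{s_0})>\rk E(\Q)$.

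The hypothesis $\#E(\Q)\ge 6$ is what allows us to choose the pencil $\{C_s\}$ so that $C_\Delta$ has infinitely many rational points. The three extra rational points $P_4,P_5,P_6$ would be used to insist that the pencil contain specific reducible members $L_1\cup L_2$ built from rational lines through pairs drawn from $\{P_4,P_5,P_6\}$ together with one of the base points. Such degenerations split off rational linear factors from $\Delta(s)$, lowering its effective degree (and hence the genus of $C_\Delta$), and simultaneously supply visible rational points on $C_\Delta$. A Hilbert irreducibility argument then shows that, for all but finitely many specialisations $s_0\in\Q$, the fields $K_{s_0}$ are pairwise non-isomorphic, producing infinitely many distinct cubic characters $\chi\in\X(3)$ with $L(E,1,\chi)=0$.

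The principal obstacle is the verification that $C_\Delta(\Q)$ is genuinely infinite. This is not automatic from the genus bound: when $C_\Delta$ is elliptic one must exhibit a section of infinite order, and when $C_\Delta$ is rational one must still verify that the parametrisation is defined over $\Q$. Both cases require an explicit computation of $\Delta(s)$ and careful tracking of the incidences forced by $P_4,P_5,P_6$, with several subcases depending on the structure of $E(\Q)$ (for instance torsion of order $\ge 6$ versus positive rank).
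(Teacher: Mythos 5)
Your outline reduces the theorem to the right statement (infinitely many cyclic cubic $K$ with a trace‑zero point of infinite order, then Kato), and your conic construction is a plausible variant of the paper's: the paper intersects $E$ with the pencil of lines $y=tx+u$ of fixed slope, obtaining the genus‑one fibers $\delta^{2}=\Delta(u,t)$ of the generalized Kummer surface $S_{E}$, while you intersect with a pencil of conics through three rational points. But the step you label ``the principal obstacle'' --- exhibiting, for \emph{every} $E$ with $\#E(\Q)\geq 6$, one fiber of such a family with infinitely many rational points --- is exactly the content of the theorem, and you do not carry it out. In the paper this is done by explicit computation in the universal families: for a rational point $P$ of order $6$ the line through $P,2P,3P$, and for $\Z/4\Z\oplus\Z/2\Z$ the line $y=x+\lambda^{2}$ through three torsion points, give a visible rational point on a specific fiber; that fiber is converted to Weierstrass form and the point is verified to have infinite order for all parameter values. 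Without an analogous verification your argument proves nothing. Moreover the genus bound you use to make this obstacle look mild is unjustified: in a pencil $C_{s}$ of conics the coefficients of the residual cubic are of degree up to $3$ in $s$, so its discriminant $\Delta(s)$ has degree far exceeding $4$ in general, and you cannot assert that $u^{2}=\Delta(s)$ has genus at most one. (In the paper the discriminant is checked to be a \emph{quartic} in $u$ with constant leading coefficient $-27$, which is what makes the fibers genus one.)

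Two further steps are also gapped. First, nonrationality of the $Q_{i}(s_{0})$ does not make $Q_{1}+\zeta\sigma Q_{1}+\zeta^{2}\sigma^{2}Q_{1}$ nonzero in $E(K_{s_{0}})\otimes\C$: this element vanishes whenever $Q_{1}$ is torsion over $K_{s_{0}}$, or more generally when some multiple of $Q_{1}$ lies in $E(\Q)$ up to torsion. The paper disposes of the torsion possibility with Merel's uniform bound for torsion over fields of bounded degree (end of the proof of Theorem 5.1), and you need something of this kind. Second, your appeal to Hilbert irreducibility to get pairwise non‑isomorphic fields is misapplied: the specializations $s_{0}$ you are allowed to use are the images of rational points of the (possibly elliptic) curve $u^{2}=\Delta(s)$, which form a thin set, precisely where Hilbert's theorem gives no information; and even on a non‑thin set it would yield ``outside a thin set,'' not ``for all but finitely many.'' The paper's route is different and does work: a fixed cyclic cubic field $K$ can arise from only finitely many points of a good fiber, because those points give $K$‑points of an auxiliary curve of genus $3$, finite by Faltings (Lemma 5.4); hence infinitely many rational points on one good fiber already force infinitely many distinct fields $K_{\chi}$. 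As it stands, your proposal defers the decisive computation and rests two auxiliary steps on arguments that do not apply.
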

}

A random matrix model for the distribution
of zeros of $L$-functions in families was introduced by Katz and
Sarnak (\cite{Ka-Sa}) and was related to the distribution of eigenvalues of random
matrices taken from  classical groups. They proved that the model held 
in the case of $L$-functions attached to certain families of curves over finite fields.
This heuristic was applied by Conrey, Keating, Rubinstein and Snaith
(\cite{CKRS}) to give rather precise 
predictions for the frequency of vanishing of 
the central values of quadratic twists of elliptic $L$-functions
with sign $+1.$ 
In \cite{DFK1} and \cite{DFK2}, their work was adapted
to predict the frequency of vanishing of $L(E,1,\chi)$ of twists 
of the $L$-function by Dirichlet characters $\chi$ of fixed 
order greater than 2. The predictions for $\N_E(n,X)$ as $X \to \infty$ become
\begin{align*}
\N_E(n,X) &\sim b_EX^{1/2}\log^{a_E}(X) {\text{\ \   if \ \   }} \phi(n)=2\\
         &\sim \log^{a'_E}(X) {\text{\ \   if \ \  }} \phi(n)=4\\
         & {\phantom \sim}{\text{\ \   is bounded if \ \   }} \phi(n)\geq 6
\end{align*}
where $\phi$ is Euler's totient and  $b_E, a_E,a'_E\neq 0$.  
These predictions compare favourably to the numerical computations  
reported in \cite{DFK1} and \cite{DFK2}.

In \S 7 we will work out the case of a curves with rational
$3$-torsion, and for many such curves $E/\Q$ we will obtain the
strong lower bound
\[
\N_E(3,X)\gg X^{1/2}.
\]

We wish to express our appreciation to the referee for his careful 
reading of the manuscript and for his useful comments. 

\section{The Quadratic Case}
 
If $\chi$ is a {\it real \/} primitive character, i.e., if
$\chi^2=\chi_0$, then $\chi=\chi_0$ or $\chi = \legendre{\cdot}{D}$ is the
character of a quadratic field $\Q(\sqrt D)$, and $\f=D$, a
fundamental discriminant. In the latter case, $L(E,s,\chi)$ is the
$L$-function of the elliptic curve $E^D$, the twist of $E$ by
$D$. Since $\chi$ is real, the functional equation relates
$L(E,1,\chi)$ to itself and necessarily vanishes if the sign of
the functional equation  $w_{E^D}=-1$. For a primitive quadratic
character $\chi$, with $(\f, N_E)=1$, the sign of the functional
equation for $L(E,s,\chi)$ is equal to $\chi(-N_E)$ times that
of $L(E,s)$. Hence we see that $L(E,1,\chi)=0$ for at least one
half of such quadratic characters. It follows from the theorem
of Waldspurger \cite{Wa} (see also Ono-Skinner \cite{O-Sk}),
that there are an infinite number of quadratic characters $\chi$
for which $L(E,1,\chi)\neq 0$.

Gouv\^ea-Mazur \cite{Go-M} show, assuming the parity conjecture
(that $\rk E(\Q)$ of an elliptic curve, $E$, has the same parity
as $\ord_{s=1} L(E,s)$), that there are infinitely many twists
$D$ with Mordell-Weil groups of rank at least $2$. They show
(under the parity conjecture) that
\[
\N^2_E(2,X)=\#\{\chi\in {\X}(2) \mid \f<X, L(E,1,\chi)=0=L'(E,1,\chi)\}
\gg X^{1/2 - \epsilon}.
\]
Stewart-Top \cite{St-T} have removed the parity conjecture in
the Gouv\^ea-Mazur result and obtain $X^{1/7-\epsilon}$
($X^{1/6-\epsilon}$ for some special families of curves). Liem
Mai \cite{Mai} proved that the number of cubic twists of
$x^3+y^3=d$ for which the corresponding $L$-function has at
least a double zero at $s=1$ is $ \gg X^{2/3-\epsilon}$, also
assuming the parity conjecture. Goldfeld, \cite{G}, conjectured 
that for a given elliptic curve $E$ defined over $\Q$,
asymptotically one half of the quadratic twists $L(E,s,\chi)$ of
$L(E,s)$ will have a {\it simple \/} zero at $s=1$ and that
asymptotically one half will be non-vanishing.

Murty-Murty \cite{Mu-Mu} and Bump-Friedberg-Hoffstein
\cite{B-F-H} have shown that
\[
L(E,1,\chi)=0\neq L'(E,1,\chi)
\]
occurs for infinitely many quadratic characters $\chi$.

In the case of twists of $L(E,s)$ by characters of higher order,
i.e., by characters $\chi$ of order $\ell \geq 3$,  
$\chi$ assumes complex values and the functional equation relates  
$L(E,s, \chi)$ to $L(E,s,\overline\chi)$. Consequently, there is 
no longer a ``forced'' central zero due to the sign of the functional 
equation. It is an interesting question to determine the number of 
characters $\chi$ in a given set of characters $\X$
for which $L(E,1,\chi)=0$.

Rohrlich \cite{Ro2} has shown that among all Dirichlet characters $\chi$ 
with conductors supported on a finite set of primes, only finitely many
can vanish at $s=1$. Stefanicki \cite{Stef}, Akbary \cite{Ak}, and 
Murty \cite{Mu} give various nonvanishing results for twisted central 
$L$-values and of particular interest is the result of Chinta \cite{Ch}, 
which states that for sufficiently large prime $q$ 
\[
\#\{\chi \mid \f=q,L(E,1,\chi)=0\}\ll q^{7/8 +\epsilon}.
\]

\section{Non-Vanishing of Twists of Prime Order}

{\allowdisplaybreaks

Let $\ell>2$ denote an {\it odd prime \/} number and suppose that
$E$ is an elliptic curve defined over the rational field $\Q$.
Let $\chi \in \X(\ell)$ be a Dirichlet character.
We will demonstrate a congruence between the algebraic part
of $L(E,1)$ and the algebraic part $L(E,1,\chi)$. In the case 
that $L(E,1)\neq 0$ this will allow us to prove that for all but 
a finite number of primes $\ell$, there are infinitely many characters 
$\chi \in \X(\ell)$ such that $L(E,1,\chi)\neq 0$.

Let $f$ be a weight two modular form of level $N$. 
We recall the properties of modular symbols
following  Mazur, Tate and Teitelbaum (\cite{M-T-T}).

For $\alpha$ and $\beta$ in the upper half plane, define a modular 
symbol $\{\alpha ,\beta \}$ as a linear 
functional on cuspforms $f\in S_2(N)(=S_2(\Gamma_0(N))$ by
\[
\{\alpha ,\beta \}f=2\pi i \cdot\int_{\alpha }^{\beta }f(z)dz.
\]

For a fixed  cuspform $f\in S_2(N)$, we will write  $\{\alpha ,\beta \}$ for
$\{\alpha ,\beta \}f$.
The properties of the modular symbol which are important for our purposes
are summarized below:

\begin{prop}[$L$-function relation]
The $L$-series of a cuspform at its critical point can be expressed
as a modular symbol
\[
L(f,1)=\left\{ \infty ,0\right\} .
\]
\end{prop}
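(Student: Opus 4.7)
The plan is to establish the identity by computing both sides along the imaginary axis and using the standard Mellin transform representation of the $L$-series attached to a cusp form.

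First I would fix the contour of integration in the modular symbol to be the imaginary axis, parametrising $z = iy$ with $y$ going from $\infty$ down to $0$. The definition of the modular symbol gives
\[
\{\infty,0\}f = 2\pi i \int_\infty^0 f(z)\,dz = 2\pi i \int_\infty^0 f(iy)\cdot i\,dy = 2\pi \int_0^\infty f(iy)\,dy.
\]
One has to check that this integral converges at both endpoints: at $y=\infty$ the cusp form decays exponentially (since $f$ has a zero at the cusp $\infty$, $f(iy) = O(e^{-2\pi y})$), and at $y=0$ the modular transformation $z \mapsto -1/z$ (or the behaviour of $f$ at the cusp $0$) gives the analogous exponential decay in $1/y$. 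This justifies splitting the contour and guarantees independence from the chosen path.

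Next I would invoke the Mellin transform identity. Writing the Fourier expansion $f(z) = \sum_{n\geq 1} a_n q^n$ with $q = e^{2\pi i z}$, the term-by-term integral
\[
\int_0^\infty f(iy)\, y^{s-1}\,dy = \sum_{n\geq 1} a_n \int_0^\infty e^{-2\pi n y} y^{s-1}\,dy = (2\pi)^{-s}\Gamma(s) L(f,s)
\]
holds for $\Re(s)$ large, and by analytic continuation for all $s \in \C$. Specialising to $s = 1$ gives $\int_0^\infty f(iy)\,dy = (2\pi)^{-1} L(f,1)$.

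Combining the two computations produces $\{\infty,0\}f = 2\pi\cdot(2\pi)^{-1} L(f,1) = L(f,1)$, as required. The only subtlety is justifying the termwise integration and the convergence at the two cusps; for a cusp form of weight two these facts are standard, and I would quote them from Mazur--Tate--Teitelbaum or the foundational references on modular symbols. Beyond that the proof is a one-line manipulation.
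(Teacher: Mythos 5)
Your proof is correct. The paper itself offers no argument for this proposition --- it is simply recalled from Mazur--Tate--Teitelbaum --- and the Mellin-transform computation you give is precisely the standard proof underlying it: parametrising the geodesic from $\infty$ to $0$ by $z=iy$ turns $\{\infty,0\}f$ into $2\pi\int_0^\infty f(iy)\,dy$, and the identity $\int_0^\infty f(iy)\,y^{s-1}\,dy=(2\pi)^{-s}\Gamma(s)L(f,s)$ specialised at $s=1$ finishes it. The one point that needs care, and which you handle correctly, is that the termwise integration is only legitimate for $\Re(s)$ large (the Dirichlet series need not converge at $s=1$, where $L(f,1)$ is defined by analytic continuation); the identity at $s=1$ then follows because the integral $\int_0^\infty f(iy)\,y^{s-1}\,dy$ is an entire function of $s$, thanks to the exponential decay of $f$ at both cusps --- at $y\to\infty$ from the Fourier expansion and at $y\to 0$ from the modular transformation --- and so agrees with $(2\pi)^{-s}\Gamma(s)L(f,s)$ everywhere by continuation. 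Since you structure the argument exactly in this way, nothing essential is missing.
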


\begin{prop}[Birch's Theorem] 
The value of an $L$-series twisted by a Dirichlet character can be expressed
as a weighted sum of modular symbols
\[
L(f,1,\chi )=\frac{\tau (\chi )}{\f}\sum_{a \bmod \f}\overline{\chi}(a)\left\{
\infty ,\frac{a}{\f}\right\}
\]
where $\tau (\chi )$ is the Gauss sum.
\end{prop}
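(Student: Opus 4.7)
The plan is to express $L(f,1,\chi)$ as a Mellin integral against the twisted modular form $f_{\chi}(z)=\sum_{n\geq 1}\chi(n)a_n e^{2\pi inz}$, apply Gauss-sum inversion to rewrite $f_{\chi}$ as a weighted sum of translates of $f$, and then identify the resulting vertical-line integrals as the modular symbols $\{\infty,a/\f\}$. The entire argument runs parallel to the derivation of the previous proposition, only now with a twist.

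First I would set up, for $\Re(s)$ large and extending by the analytic continuation already recorded in the introduction,
\[
(2\pi)^{-s}\Gamma(s)L(f,s,\chi)=\int_0^{\infty}f_{\chi}(iy)\,y^{s-1}\,dy,
\]
so that at $s=1$, $L(f,1,\chi)=2\pi\int_0^{\infty}f_{\chi}(iy)\,dy$. Next I would invoke the standard Gauss-sum inversion formula for a primitive character,
\[
\chi(n)=\frac{1}{\tau(\overline{\chi})}\sum_{a\bmod\f}\overline{\chi}(a)\,e^{2\pi ina/\f},
\]
which gives
\[
f_{\chi}(z)=\frac{1}{\tau(\overline{\chi})}\sum_{a\bmod\f}\overline{\chi}(a)\,f\!\left(z+\tfrac{a}{\f}\right).
\]
Substituting and making the change of variables $w=iy+a/\f$ termwise, the inner integral becomes
\[
2\pi\int_0^{\infty}f\!\left(iy+\tfrac{a}{\f}\right)dy=-2\pi i\int_{a/\f}^{i\infty}f(w)\,dw=\left\{\infty,\tfrac{a}{\f}\right\},
\]
yielding the preliminary identity
\[
L(f,1,\chi)=\frac{1}{\tau(\overline{\chi})}\sum_{a\bmod\f}\overline{\chi}(a)\left\{\infty,\tfrac{a}{\f}\right\}.
\]

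Finally I would convert the prefactor using the identity $\tau(\chi)\tau(\overline{\chi})=\chi(-1)\f$, so that $1/\tau(\overline{\chi})=\chi(-1)\tau(\chi)/\f$, and absorb the factor $\chi(-1)$ by reindexing $a\mapsto -a$ in combination with the involution on modular symbols coming from $z\mapsto -\bar z$ (using that $f$ has real Fourier coefficients, so $\overline{f(z)}=f(-\bar z)$, and that $\overline{\chi}(-1)=\chi(-1)$). The main obstacle is exactly this sign and convention bookkeeping: matching which of $\tau(\chi)$ or $\tau(\overline{\chi})$ appears, and tracking the orientation of the path defining the modular symbol. The Mellin inversion, the absolute convergence required to interchange summation and integration, and the identification of each vertical-line integral with a modular symbol at the cusp $a/\f$ are routine in comparison.
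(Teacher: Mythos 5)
Your overall route is the standard one (and the one the paper implicitly relies on: the proposition is Birch's lemma, recalled from Mazur--Tate--Teitelbaum without proof): Mellin integral for $L(f,1,\chi)$, Gauss-sum inversion $\chi(n)=\tau(\overline\chi)^{-1}\sum_a\overline\chi(a)e^{2\pi ina/\f}$, and termwise identification of $2\pi\int_0^\infty f\bigl(iy+\tfrac{a}{\f}\bigr)dy$ with $\{\infty,a/\f\}$. Everything up to your preliminary identity $L(f,1,\chi)=\tau(\overline\chi)^{-1}\sum_{a}\overline\chi(a)\{\infty,a/\f\}$ is correct, and the interchange of sum and integral is indeed routine.

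The final step, however, does not work as you describe, and this is a genuine gap rather than bookkeeping. Reindexing $a\mapsto-a$ turns your sum into $\chi(-1)\sum_a\overline\chi(a)\{\infty,-a/\f\}$, and the involution coming from $\overline{f(z)}=f(-\bar z)$ gives $\{\infty,-a/\f\}=\overline{\{\infty,a/\f\}}$; but you cannot conjugate only the symbols inside the sum --- conjugating honestly also conjugates the coefficients $\overline\chi(a)$, and what you actually obtain is $L(f,1,\chi)=\frac{\tau(\chi)}{\f}\,\overline{\sum_a\chi(a)\{\infty,a/\f\}}$, i.e.\ the reflection identity $L(f,1,\chi)=\overline{L(f,1,\overline\chi)}$, not the removal of the sign. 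With the paper's normalizations ($\tau(\chi)=\sum_c\chi(c)e^{2\pi ic/\f}$, $\{\alpha,\beta\}=2\pi i\int_\alpha^\beta f\,dz$, which is consistent with $L(f,1)=\{\infty,0\}$), the exact identity your computation proves is $L(f,1,\chi)=\chi(-1)\frac{\tau(\chi)}{\f}\sum_a\overline\chi(a)\{\infty,a/\f\}$, equivalently $\frac{\tau(\chi)}{\f}\sum_a\overline\chi(a)\{\infty,-a/\f\}$; the factor $\chi(-1)$ is genuinely present for odd characters and cannot be argued away. It is harmless in this paper because only even characters are used (characters of odd prime order satisfy $\chi(-1)=1$, which is exactly why the authors pass to $\Lambda^+$ and $\Omega^+$), so the correct fix is either to add the hypothesis $\chi(-1)=1$, or to state the formula with $-a/\f$ inside the symbol (or with $\tau(\overline\chi)^{-1}$ in front), rather than to claim the sign can be absorbed by the conjugation trick.
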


\begin{prop}[Hecke action]
For an eigenform $f$ of the Hecke operator $T_{p}$ with eigen\-value $a_{p}$ we have
an action on the modular symbol as follows
\[
a_{p}\left\{ \infty ,\frac{a}{\f}\right\} =\left\{ \infty ,\frac{a}{\f}
\right\} ^{T_{p}}=\sum_{u=0}^{p-1}\left\{ \infty ,\frac{a-u\f}{p\f}\right\}
+\delta (p)\left\{ \infty ,\frac{ap}{\f}\right\}
\]
where $\delta (p)=0$ if $p|N$ and  $\delta (p)=1$ otherwise.
\end{prop}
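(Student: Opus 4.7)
The plan is to transfer the classical action of the Hecke operator $T_p$ on weight-two forms into an action on the modular symbol $\{\alpha,\beta\}$ by a change of variables inside the defining integral, and then to specialize to $(\alpha,\beta)=(\infty,a/\f)$. I would start from the standard formula
\[
(T_p f)(z)=\frac{1}{p}\sum_{u=0}^{p-1}f\!\left(\frac{z+u}{p}\right)+\delta(p)\,p\,f(pz),
\]
which arises from the weight-$2$ slash action of the coset representatives $\bigl(\begin{smallmatrix}1&u\\0&p\end{smallmatrix}\bigr)$ for $u=0,\dots,p-1$ together with $\bigl(\begin{smallmatrix}p&0\\0&1\end{smallmatrix}\bigr)$ when $p\nmid N$, with $\delta(p)$ as in the proposition.

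Pairing this with the integral representation of the modular symbol,
\[
\{\alpha,\beta\}(T_p f)=2\pi i\int_\alpha^\beta (T_p f)(z)\,dz,
\]
and performing the substitutions $w=(z+u)/p$ in each term of the sum (which absorbs the $1/p$ prefactor and shifts the endpoints to $(\alpha+u)/p,(\beta+u)/p$) and $w=pz$ in the last term (which absorbs the $p$ prefactor and sends the endpoints to $p\alpha,p\beta$), I obtain the general identity
\[
\{\alpha,\beta\}(T_p f)=\sum_{u=0}^{p-1}\bigl\{(\alpha+u)/p,(\beta+u)/p\bigr\}f+\delta(p)\{p\alpha,p\beta\}f.
\]

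Setting $\alpha=\infty$ and $\beta=a/\f$, the equalities $(\infty+u)/p=\infty$ and $p\infty=\infty$ leave the left endpoints equal to $\infty$, while the right endpoints become $(a+u\f)/(p\f)$ and $pa/\f$. A reindexing $u\mapsto -u\pmod p$, which merely permutes the summation range, converts $a+u\f$ into $a-u\f$ and matches the sum in the statement. Finally, since $f$ is a $T_p$-eigenform with eigenvalue $a_p$, the left side equals $a_p\{\infty,a/\f\}f$, completing the proof.

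No serious obstacle arises; the argument is essentially a change of variables. The one delicate point is compatibility of normalizations: the factor $1/p$ in front of the sum and the factor $p$ in front of $f(pz)$ in the weight-$2$ formula for $T_p$ must cancel exactly against the Jacobians of the substitutions so that the right-hand side assembles into integrals of $f$ itself, with no spurious scalar factors surviving. Once this is verified, the remaining work is only a bookkeeping of indices.
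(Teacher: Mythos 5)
Your argument is correct, and there is no internal proof to compare it against: the paper states this proposition without proof, recalling it (with the other modular-symbol properties) from Mazur--Tate--Teitelbaum \cite{M-T-T}; your change-of-variables computation is the standard derivation, and the weight-two normalization factors do cancel exactly as you claim, giving $\{\alpha,\beta\}(T_pf)=\sum_{u=0}^{p-1}\{(\alpha+u)/p,(\beta+u)/p\}f+\delta(p)\{p\alpha,p\beta\}f$ and hence, at $(\infty,a/\f)$, the sum $\sum_{u=0}^{p-1}\{\infty,(a+u\f)/(p\f)\}+\delta(p)\{\infty,ap/\f\}$. The only gloss is the final reindexing: replacing $u$ by $-u\bmod p$ changes the numerator by a multiple of $p\f$, so the terms are not literally permuted as points of $\mathbb{Q}$; one also needs the invariance $\{\infty,x\}=\{\infty,x+1\}$ coming from $\bigl(\begin{smallmatrix}1&1\\0&1\end{smallmatrix}\bigr)\in\Gamma_0(N)$ --- the same invariance the paper invokes to show $\Lambda^{\pm}(a,\f)$ depends only on $a\bmod\f$ --- after which the two sums coincide and the eigenform property converts the left side into $a_p\{\infty,a/\f\}$. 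This is a minor bookkeeping point, not a gap.
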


\begin{prop}[Integrality]
There are non-zero complex numbers
$\Omega ^{\pm }$ depending only upon $f$ such that
\[
\Lambda ^{\pm }(a,\f):=\left( \left\{ \infty ,\frac{a}{\f}\right\} \pm \left\{
\infty ,\frac{-a}{\f}\right\} \right) /\Omega ^{\pm }\text{ are integers.}
\]

\end{prop}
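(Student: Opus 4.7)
The plan is to reinterpret each modular symbol $\{\infty, a/\f\}$ as a relative homology class on the modular curve $X_0(N)$, decompose it by complex conjugation into $\pm$-eigenparts, and invoke the fact that the periods of the newform $f$ attached to $E$ span a rank-one $\Z$-lattice in each eigenpart of the period map.

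First I would appeal to Manin's theorem: the symbols $\{\alpha,\beta\}$ with $\alpha,\beta \in \P^1(\Q)$, modulo the standard two- and three-term Manin relations, form a finitely generated $\Z$-module canonically isomorphic to the relative singular homology $H := H_1(X_0(N)(\C),\,\mathrm{cusps};\,\Z)$. Since $f$ is a cusp form, the functional $\gamma \mapsto 2\pi i\int_\gamma f(z)\,dz$ descends to a well-defined $\Z$-linear map $\Phi: H \to \C$. The antiholomorphic involution $\iota: z \mapsto -\bar z$ of the upper half-plane descends to an involution of $X_0(N)(\C)$ that fixes the real cusps and sends $\{\infty, a/\f\}$ to $\pm\{\infty, -a/\f\}$ (with an orientation sign); after inverting $2$ this produces a decomposition $H = H^+ \oplus H^-$ into $\pm$-eigenspaces, in which the combinations $\{\infty, a/\f\} + \{\infty,-a/\f\}$ and $\{\infty, a/\f\} - \{\infty,-a/\f\}$ respectively lie.

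The crucial step is then to invoke Shimura's theorem on periods of a normalized Hecke eigenform --- equivalently, in our setting, the modular parametrization $X_0(N)\to E$ arising from the modularity of $E$: there exist non-zero complex numbers $\Omega^+$ (real) and $\Omega^-$ (purely imaginary) such that $\Phi(H^\pm)\subseteq \Omega^\pm \Z$. Concretely, $\Omega^\pm$ may be taken to be (up to an explicit rational factor) the real and imaginary periods of a N\'eron differential on $E$, and since $f$ is determined by $E$, so are $\Omega^\pm$. Setting $\Lambda^\pm(a,\f) := \Phi\bigl(\{\infty, a/\f\} \pm \{\infty,-a/\f\}\bigr)/\Omega^\pm$ then yields integers by construction.

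The main obstacle is the discreteness of $\Phi(H^\pm)$: pairing a finitely generated abelian group with a single cusp form lands a priori only in a finitely generated subgroup of $\C$, which need not be discrete. The content of Shimura's period theorem --- equivalently, for us, the existence of a modular parametrization of $E$ --- is that $f$ cuts out a rank-one $\Z$-quotient of each $H^\pm$, so that the image is a cyclic subgroup of a one-dimensional real subspace of $\C$ admitting a non-zero generator $\Omega^\pm$. Everything else reduces to bookkeeping with the Manin relations and the geometric action of $\iota$.
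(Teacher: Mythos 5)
The paper itself offers no proof of this proposition: it is recalled from Mazur--Tate--Teitelbaum as a standard property of modular symbols, and the argument you sketch (Manin's theorem that the symbols generate $H_1(X_0(N)(\C),\mathrm{cusps};\Z)$, the $\pm$-splitting under the real involution, and a rank-one period statement for the $f$-part) is indeed the standard proof lying behind that citation. So your route is the right one in outline.

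There is, however, one genuine gap at the crucial step. The modular parametrization $\phi:X_0(N)\to E$ (or Shimura's period theorem, as you invoke it) controls the periods of \emph{absolute} homology classes: a closed cycle pushes forward to a closed cycle on $E$, whose period lies in the N\'eron lattice up to the Manin constant. But $\{\infty,a/\f\}$ is a \emph{relative} class, a path between cusps; under $\phi$ it becomes a path on $E$ from $\phi(\infty)$ to $\phi(a/\f)$, and the resulting incomplete period is a rational multiple of the periods only if $\phi(a/\f)-\phi(\infty)$ is a \emph{torsion} point of $E$. That is precisely the Manin--Drinfeld theorem (degree-zero cuspidal divisors are torsion in $J_0(N)$), equivalently the fact that operators of the form $T_p-(p+1)$ act invertibly on the boundary (Eisenstein) part of $H_1(X_0(N),\mathrm{cusps};\Q)$; it is not a consequence of the existence of the modular parametrization alone, and you never name it. Without it, $\Phi(H^{\pm})$ is a priori a finitely generated subgroup of the relevant real line of rank greater than one, say $\Z\,\Omega^{\pm}+\Z\,\alpha$ with $\alpha/\Omega^{\pm}$ irrational, and then no rescaling of $\Omega^{\pm}$ makes all $\Lambda^{\pm}(a,\f)$ integral. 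Once Manin--Drinfeld is supplied, the rest of your argument is sound: the image is a finitely generated subgroup of the one-dimensional $\Q$-vector space $\Q\,\Omega^{\pm}$, hence cyclic, and a generator serves as the period (the paper then further normalizes $\Omega^{\pm}$ so that the integers $\Lambda^{\pm}(a,\f)$ have greatest common divisor one).
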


In the case that $f$ is the cuspform associated to an elliptic curve, then
the numbers $\Omega ^{\pm }$ are rational multiples of the periods 
of the elliptic curve. As in \cite{M-T-T}, choose such an $\Omega ^{\pm }$ (up to sign) so 
that the set of integers $\Lambda ^{\pm }(a,\f)$ have greatest common divisor 
equal to one.

Note that for $\gamma \in \Gamma_0(N),$ and $f \in S_2(\Gamma_0(N)),$ that
$\{\gamma(\alpha) ,\gamma(\beta) \}f=\{\alpha ,\beta \}f.$ It follows
that $\Lambda ^{\pm }(a,\f)$ depends only on the residue class of $a \mod \f.$

Since we consider characters $\chi$ of prime order $\ell>2$, they will be  
even characters (i.e.~$\chi(-1)=1) $ and we
then take the positive sign. In what follows we write $\Lambda $ 
for $\Lambda ^{+}$ and $\Omega$ for $\Omega^{+} $.

Following \cite{M-T-T}
define the algebraic part of $L(f,1,\chi)$ to be
 \begin{align*}
L^{\text{alg}}(f,1,\chi)&=\frac{2\f L(f,1,\chi)}{\Omega\tau(\chi)}\\
                        &= \sum_{a\bmod \f}\overline{\chi}(a)\Lambda(a,\f)
\end{align*}
 where $\Omega=\Omega^+$, chosen as above
is independent of $\chi$ and $\Lambda(a,\f)\in\Z$.

Let $\mathfrak l$ be a prime dividing $\ell$ in the cyclotomic field $\Q(\zeta_{\ell})$
of $\ell$-th roots of unity and let $\chi \in \X(\ell)$ be a Dirichlet 
character with conductor $\f$. Then
\begin{align*}
 \chi (a) &\equiv 1 \bmod \mathfrak{l} \text{  when  } (a,\f)=1 \\
 \chi (a) &= 0 \text{  when  } (a,\f)\neq 1.
\end{align*}

So
\[
\sum_{a \bmod \f}\overline{\chi}(a)\Lambda (a,\f)\equiv 
\sum\limits_{\substack{a \bmod \f \\
(a,\f)=1  }} \Lambda (a,\f) \bmod \mathfrak{l}.
\]

Fix a cuspform $f\in S_2(N)$, let
\[
\Sigma_{m}(t):=\sum\limits_{\substack{ a \bmod t  \\ (a,m)=1  }} \Lambda (a,t).
\]
For a character of order $\ell$ and conductor $\f$ we have
\[
L^{\text{alg}}(f,1,\chi )\equiv \Sigma_{\f}(\f) \bmod \mathfrak{l}.
\]

\begin{thm}\label{thm:alg_part_of_L}
Let $f\in S_2(N)$ be a simultaneous eigenform for all the
Hecke operators. Let $\chi $ be Dirichlet character of order dividing $\ell$ and 
conductor $\f$, and let $\psi \in \X(\ell) $  
and prime conductor $\fpsi=p$ with $(\f,p)=1$. 
Let $\delta (t)=1$ if $(t,N)=1$ and zero otherwise. Then 
\[
L^{\text{alg}}(f,1,\chi \psi )\equiv (a_{p}-\delta (p)-1)
L^{\text{alg}}(f,1,\chi ) \bmod \mathfrak l.
\]
If $\varphi $ is the Dirichlet character of order $\ell$ and conductor $\ell^{2}$
prime to $\f$ we have
\[
L^{\text{alg}}(f,1,\chi \varphi )\equiv (a_{\ell}-1)(a_{\ell}-\delta
(\ell))L^{\text{alg}}(f,1,\chi ) \bmod \mathfrak l.
\]
\end{thm}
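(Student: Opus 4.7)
The plan is to combine the already-established reduction $L^{\text{alg}}(f,1,\chi) \equiv \Sigma_{\f}(\f) \bmod \mathfrak{l}$ with the Hecke action on modular symbols. Since the characters $\chi\psi$ and $\chi\varphi$ also have order dividing $\ell$, and their conductors are $p\f$ and $\ell^2\f$ respectively (by the coprimality hypotheses), the same reduction applies and yields $L^{\text{alg}}(f,1,\chi\psi) \equiv \Sigma_{p\f}(p\f) \bmod \mathfrak{l}$ and $L^{\text{alg}}(f,1,\chi\varphi) \equiv \Sigma_{\ell^2\f}(\ell^2\f) \bmod \mathfrak{l}$. Both congruences will therefore follow once $\Sigma_{p\f}(p\f)$ and $\Sigma_{\ell^2\f}(\ell^2\f)$ are expressed in terms of $\Sigma_{\f}(\f)$ via the Hecke recursion.

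For the first assertion, I apply Proposition (Hecke action) to $\{\infty,a/\f\}$, take the $+$-part, divide by $\Omega$, and sum over $a \bmod \f$ with $(a,\f)=1$. The left side becomes $a_p\,\Sigma_{\f}(\f)$. On the right, a short Chinese Remainder argument using $(p,\f)=1$ shows that as $(a,u)$ vary with $u\in\{0,\dots,p-1\}$, the quantity $a-u\f$ runs bijectively over residues $b \bmod p\f$ with $(b,\f)=1$. Splitting this sum according to whether $p\mid b$, and using the elementary identity $\Lambda(pc,p\f)=\Lambda(c,\f)$ (immediate from $pc/(p\f)=c/\f$), the first piece decomposes as $\Sigma_{p\f}(p\f)+\Sigma_{\f}(\f)$. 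The $\delta(p)$-piece contributes $\delta(p)\,\Sigma_{\f}(\f)$ because multiplication by $p$ permutes $(\Z/\f)^\times$. Collecting terms gives the \emph{integer} identity
\[
\Sigma_{p\f}(p\f) = (a_p - \delta(p) - 1)\,\Sigma_{\f}(\f),
\]
from which the first congruence is immediate.

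For the second assertion, I apply the recursion twice. The first application (with $p=\ell$ at conductor $\f$, which is coprime to $\ell$) gives $\Sigma_{\ell\f}(\ell\f)=(a_\ell-\delta(\ell)-1)\,\Sigma_{\f}(\f)$. The second application starts from conductor $\ell\f$ and multiplies by $\ell$; here $\ell$ already divides the current conductor, so the CRT count changes. A direct check shows that for $a$ coprime to $\ell\f$ and $u\in\{0,\dots,\ell-1\}$, $a-u(\ell\f)$ runs bijectively over the full unit group $(\Z/\ell^2\f)^\times$, so the first Hecke piece is exactly $\Sigma_{\ell^2\f}(\ell^2\f)$. The $\delta(\ell)$-piece equals $\delta(\ell)\sum_{(a,\ell\f)=1}\Lambda(a\ell,\ell\f)=\delta(\ell)(\ell-1)\,\Sigma_{\f}(\f)$, using $\Lambda(a\ell,\ell\f)=\Lambda(a,\f)$ together with the fact that each residue class mod $\f$ lifts to $\ell-1$ units mod $\ell\f$.

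Assembling, $\Sigma_{\ell^2\f}(\ell^2\f) = a_\ell\,\Sigma_{\ell\f}(\ell\f) - \delta(\ell)(\ell-1)\,\Sigma_{\f}(\f)$; substituting the first step and reducing mod $\mathfrak{l}$ (where $\ell-1\equiv-1$) produces $(a_\ell-1)(a_\ell-\delta(\ell))\,\Sigma_{\f}(\f)$, which is the second congruence. The main technical obstacle is the second Hecke step at a prime dividing the conductor: one must correctly identify the image of the $u$-sum in $(\Z/\ell^2\f)^\times$ and recognize that the $\delta(\ell)$ term with its multiplicity $\ell-1$ is precisely what collapses modulo $\mathfrak{l}$ into the factored expression $(a_\ell-1)(a_\ell-\delta(\ell))$.
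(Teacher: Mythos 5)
Your proof is correct, and its first half is essentially the paper's own argument: apply the Hecke action to the symbols $\{\infty,a/\f\}$, sum over units, and identify the $u$-sum as $\Sigma_{\f}(p\f)=\Sigma_{p\f}(p\f)+\Sigma_{\f}(\f)$, yielding the integer identity $\Sigma_{p\f}(p\f)=(a_p-\delta(p)-1)\Sigma_{\f}(\f)$. For the conductor-$\ell^2$ case your route differs in organization from the paper's. The paper applies $T_\ell$ \emph{twice} to the level-$\f$ symbols, so the left side is $a_\ell^2\Sigma_{\f}(\f)$ and the right side expands into four pieces ($\Sigma_{\f}(\ell^2\f)$, $\delta(\ell)\ell\,\Sigma_{\f}(\f)$, $\delta(\ell)\Sigma_{\f}(\ell\f)$, $\delta(\ell)\Sigma_{\f}(\f)$), which after further decomposition gives the integer identity $\Sigma_{\ell^2\f}(\ell^2\f)=\bigl[(a_\ell-1)(a_\ell-\delta(\ell))-\delta(\ell)\ell\bigr]\Sigma_{\f}(\f)$ and then reduces mod $\mathfrak{l}$ using $\ell\equiv 0$. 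You instead iterate the one-prime recursion, applying $T_\ell$ once more but to the level-$\ell\f$ symbols, where $\ell$ now divides the modulus; the key points you correctly supply are that the translates $a-u\ell\f$ then cover $(\Z/\ell^2\f)^\times$ exactly once (so no ``old'' term $\Sigma_{\ell\f}$ appears), and that the $\delta(\ell)$-piece carries multiplicity $\ell-1$. Your intermediate identity $\Sigma_{\ell^2\f}(\ell^2\f)=a_\ell\Sigma_{\ell\f}(\ell\f)-\delta(\ell)(\ell-1)\Sigma_{\f}(\f)$ is, after substitution, exactly the paper's integer identity, and your reduction via $\ell-1\equiv-1\bmod\mathfrak{l}$ matches the paper's via $\ell\equiv0$. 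The trade-off: your two-step induction keeps each Hecke computation to two terms and isolates the genuinely new phenomenon (the covering count when the prime divides the modulus), at the cost of needing the reduction $L^{\text{alg}}\equiv\Sigma$ at the intermediate modulus only implicitly; the paper's single double-application avoids reasoning at level $\ell\f$ but requires bookkeeping a longer expansion. Both are complete proofs.
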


Since any character $\psi$ of order $\ell$ and conductor $\fpsi$ 
can be factored as a product of characters of order $\ell$ either 
with prime conductors, or with conductor $\ell^2$, we can 
iterate the above result to obtain:

\begin{cor} 
For $f\in S_2(N)$ as above, $\chi$ a character of order dividing $\ell$, and $\psi\in\X(\ell)$, if $\fpsi$ is not 
divisible by $\ell$ then we have 
\[
L^{\text{alg}}(f,1,\chi\psi)\equiv 
  L^{\text{alg}}(f,1,\chi)\prod_{p\mid \fpsi}(a_p-\delta(p)-1) \bmod {\mathfrak l}.
\]
or if $\ell \mid \fpsi$
\[
L^{\text{alg}}(f,1,\chi\psi)\equiv 
  L^{\text{alg}}(f,1,\chi)(a_{\ell}-1)(a_{\ell}-\delta(\ell))
\prod\limits_{\substack{p\mid \fpsi \\
p\neq \ell}}(a_p-\delta(p)-1) \bmod {\mathfrak l}.
\]
\end{cor}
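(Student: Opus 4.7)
The plan is to decompose $\psi$ into its local components at each prime dividing $\fpsi$ and then apply Theorem~\ref{thm:alg_part_of_L} repeatedly. By a standard fact, a primitive Dirichlet character of order $\ell$ (an odd prime) and prime-power conductor $p^e$ exists precisely when either $p \equiv 1 \pmod{\ell}$ with $e = 1$, or $p = \ell$ with $e = 2$. Consequently $\psi$ admits a unique factorization
\[
\psi = \psi_1 \psi_2 \cdots \psi_r,
\]
where each $\psi_i \in \X(\ell)$ has conductor equal to a distinct prime $p_i$ dividing $\fpsi$, except that if $\ell \mid \fpsi$ then exactly one of the factors has conductor $\ell^2$ instead.

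I would then proceed by induction on the number $r$ of local factors. Set $\chi_0 := \chi$ and $\chi_i := \chi \psi_1 \cdots \psi_i$ for $1 \leq i \leq r$. Because the conductors of $\chi$ and of the $\psi_j$ are pairwise coprime (this uses the hypothesis $(\f,\fpsi)=1$ together with the fact that the $\psi_j$ have distinct prime-power conductors), the conductor of $\chi_{i-1}$ is coprime to that of $\psi_i$ at every stage, and $\chi_{i-1}$ has order dividing $\ell$. These are precisely the hypotheses of Theorem~\ref{thm:alg_part_of_L} applied with $\chi$ replaced by $\chi_{i-1}$ and $\psi$ by $\psi_i$. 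When $\psi_i$ has prime conductor $p_i \neq \ell$, the theorem yields
\[
L^{\text{alg}}(f,1,\chi_i) \equiv (a_{p_i} - \delta(p_i) - 1)\, L^{\text{alg}}(f,1,\chi_{i-1}) \bmod \mathfrak{l},
\]
while for the (at most one) factor $\psi_i$ of conductor $\ell^2$ it gives instead
\[
L^{\text{alg}}(f,1,\chi_i) \equiv (a_\ell - 1)(a_\ell - \delta(\ell))\, L^{\text{alg}}(f,1,\chi_{i-1}) \bmod \mathfrak{l}.
\]

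Chaining these $r$ congruences together recovers the two claimed formulas, the dichotomy being determined by whether $\ell \nmid \fpsi$ (every factor is of the first type, and their product is $\prod_{p \mid \fpsi}(a_p - \delta(p) - 1)$) or $\ell \mid \fpsi$ (exactly one factor contributes the extra $(a_\ell - 1)(a_\ell - \delta(\ell))$ while the primes $p \neq \ell$ dividing $\fpsi$ supply the remaining factors). I do not anticipate any real obstacle beyond careful bookkeeping: the local decomposition of characters of order $\ell$ is classical, the coprimality condition is easily verified at each stage of the induction, and congruences modulo $\mathfrak{l}$ multiply in the expected way.
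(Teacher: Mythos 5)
Your proposal is correct and follows exactly the route the paper takes: the paper's own justification is to factor $\psi$ into characters of order $\ell$ with prime conductor (or conductor $\ell^{2}$ at $\ell$) and iterate Theorem~\ref{thm:alg_part_of_L}, which is precisely your induction on the local components. Your added bookkeeping (the classification of prime-power conductors, the coprimality check at each stage, and that each $\chi_{i-1}$ has order dividing $\ell$) just makes explicit what the paper leaves implicit.
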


\begin{proof}[Proof of Theorem~\ref{thm:alg_part_of_L}]

We consider the sums $\Sigma_{m}(m),\ \Sigma_{pm}(pm)$, and $\Sigma_{p^{2}m}(p^{2}m)$. Assume that $(m,p)=1$.
\begin{align*}
 \Sigma_{m}(m)\mid T_{p} &=a_{p}\Sigma_{m}(m)=\sum\limits_{\substack{ a \bmod m  \\(a,m)=1  }} 
\left[ \sum_{u=0}^{p-1}\Lambda (a-um,pm)+\delta(p)\Lambda (ap,m)\right] \\
&=\sum\limits_{\substack{ a \bmod m  \\ (a,m)=1  }}\sum_{u=0}^{p-1}\Lambda
(a-um,pm)+\delta (p)\sum\limits_{\substack{ a \bmod m  \\ (a,m)=1  }} \Lambda
(ap,m) \\
&=\Sigma_{m}(pm)+\delta (p)\Sigma_{m}(m).
\end{align*}

Now
\begin{align*}
\Sigma_{m}(pm) &=\sum\limits_{\substack{ a \bmod pm  \\ (a,m)=1  }} \Lambda (a,pm) \\
&=\sum\limits_{\substack{ a \bmod pm  \\ (a,pm)=1  }} \Lambda (a,pm)+
\sum\limits_{\substack{ a \bmod pm  \\ (a,pm)=p  }} \Lambda (a,pm) \\
&=\Sigma_{pm}(pm)+\sum\limits_{\substack{ b \bmod m  \\ (b,m)=1  }} \Lambda (bp,pm) \\
&=\Sigma_{pm}(pm)+\Sigma_{m}(m).
\end{align*}

So
\begin{align*}
a_{p}\Sigma_{m}(m) &=\Sigma_{pm}(pm)+\Sigma_{m}(m)+\delta (p)\Sigma_{m}(m) \\
\Sigma_{pm}(pm) &=(a_{p}-1-\delta (p))\Sigma_{m}(m).
\end{align*}
Taking $m=\f$, and noting that $\fpsi=p$ we have 
\begin{align*}
L^{\text{alg}}(f,1,\chi \psi )&\equiv \Sigma_{pm}(pm)\bmod \mathfrak l\\
L^{\text{alg}}(f,1,\chi )&\equiv \Sigma_{m}(m) \bmod \mathfrak l.
\end{align*}
Therefore the first statement of Theorem~\ref{thm:alg_part_of_L} follows:
\[
L^{\text{alg}}(f,1,\chi \psi )\equiv (a_{p}-\delta (p)-1)
L^{\text{alg}}(f,1,\chi ) \bmod \mathfrak l.
\]

To treat $\Sigma_{p^{2}m}(p^{2}m)$ we apply $T_{p}$ a second time.
\begin{align*}
\left( \Sigma_{m}(m)\mid T_{p}\right) \mid  T_{p}
&=a_{p}^{2}\Sigma_{m}(m)=\sum\limits_{\substack{ a \bmod m  \\ (a,m)=1  }} \left[
\sum_{u=0}^{p-1}\Lambda (a-um,pm)^{T_{p}}+\delta (p)\Lambda
(ap,m)^{T_{p}}\right] \\
&=\sum\limits_{\substack{ a \bmod m  \\ (a,m)=1  }} \sum_{v=0}^{p-1}\sum_{u=0}^{p-1}
\Lambda (a-um-vmp,p^{2}m) \\
&\ \ \ +\delta (p)\sum\limits_{\substack{ a \bmod m  \\ (a,m)=1  }} 
\sum_{v=0}^{p-1} \Lambda((a-um)p,pm) \\
&\ \ \ +\delta (p)\sum\limits_{\substack{ a \bmod m  \\ (a,m)=1  }} 
\sum_{v=0}^{p-1}\Lambda (ap-vm,pm)+\delta (p)^{2}\sum\limits_{\substack{ a \bmod m
\\ (a,m)=1  }} \Lambda (ap^{2},m) \\
&=\Sigma_{m}(p^{2}m)+\delta (p)p\Sigma_{m}(m)+\delta
(p)\Sigma_{m}(pm)+\delta (p)\Sigma_{m}(m).
\end{align*}
Now
\[
\Sigma_{m}(pm)=\Sigma_{pm}(pm)+\Sigma_{m}(m)
\]
and
\begin{align*}
\Sigma_{m}(p^{2}m) &=\sum\limits_{\substack{ a \bmod p^{2}m  \\ (a,m)=1  }} \Lambda
(a,p^{2}m) \\
&=\sum\limits_{\substack{ a \bmod p^{2}m  \\ (a,mp^{2})=1  }} \Lambda (a,p^{2}m)+
\sum\limits_{\substack{ a \bmod p^{2}m  \\ (a,mp^{2})=p  }} \Lambda (a,p^{2}m)+
\sum\limits_{\substack{ a \bmod p^{2}m  \\ (a,mp^{2})=p^{2}  }} \Lambda (a,p^{2}m) \\
&=\Sigma_{p^{2}m}(p^{2}m)+\sum\limits_{\substack{ b \bmod pm  \\ (b,mp)=1  }} \Lambda
(bp,p^{2}m)+\sum\limits_{\substack{ c \bmod m  \\ (c,m)=1  }} \Lambda (cp^{2},p^{2}m)
\\
&=\Sigma_{p^{2}m}(p^{2}m)+\Sigma_{pm}(pm)+\Sigma_{m}(m).
\end{align*}
So
\begin{align*}
a_{p}^{2}\Sigma_{m}(m) &=\Sigma_{p^{2}m}(p^{2}m)+\Sigma_{pm}(pm)+\Sigma_{m}(m)+\delta
(p)p\Sigma_{m}(m)+\delta (p)\Sigma_{m}(m) \\
&\ \ \ +\delta (p)(\Sigma_{pm}(pm)+\Sigma_{m}(m)) \\
&=\Sigma_{p^{2}m}(p^{2}m)+\Sigma_{pm}(pm)(1+\delta (p))+\Sigma_{m}(m)(1+\delta
(p)p+2\delta (p)) \\
&=\Sigma_{p^{2}m}(p^{2}m) \\
&\ \ \ +(a_{p}-1-\delta (p))(1+\delta
(p))\Sigma_{m}(m)+\Sigma_{m}(m)(1+\delta (p)p+2\delta (p)) \\
&=\Sigma_{p^{2}m}(p^{2}m)+\Sigma_{m}(m)(a_{p}+\delta (p)a_{p}-\delta
(p)+\delta (p)p).
\end{align*}
Simplifying
\begin{align*}
\Sigma_{p^{2}m}(p^{2}m) &=\left[ a_{p}^{2}-a_{p}-\delta
(p)a_{p}-\delta (p)p+\delta (p)\right] \Sigma_{m}(m) \\
&=\left[ (a_{p}-1)(a_{p}-\delta (p))-\delta (p)p\right] \Sigma_{m}(m).
\end{align*}
Taking $p=\ell$ we see that the second statement of Theorem~\ref{thm:alg_part_of_L} now follows as above.
If $\varphi $ is the Dirichlet character of order $\ell$ and conductor $\ell^{2}=p^2$
prime to $\f$ we have
\begin{align*}
L^{\text{alg}}(f,1,\chi \varphi )&\equiv \Sigma_{p^{2}m}(p^{2}m)  \bmod \mathfrak l\\
            &\equiv \left[ (a_{\ell}-1)(a_{\ell}-\delta (\ell))-\delta (\ell)\ell\right] \Sigma_{m}(m)\bmod \mathfrak l\\
&\equiv (a_{\ell}-1)(a_{\ell}-\delta(\ell))L^{\text{alg}}(f,1,\chi ) \bmod \mathfrak l.
\end{align*}
\end{proof}

\begin{thm}\label{thm:non_vanishing_of_L^alg}
Let $E/\Q$ be an elliptic curve and let $\ell$ be a 
prime. Suppose that $L^{\text{alg}}(E,1) \not\equiv 0 \bmod  \ell$, 
then there is a set of primes, $S$, of 
positive density such that $L(E,1,\chi) \neq 0$
for any characters $\chi$ of order $\ell$ with conductor $\f$
supported on $S$.
\end{thm}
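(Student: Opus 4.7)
The plan is to combine the Corollary above with a Chebotarev density argument applied to the mod-$\ell$ Galois representation
\[
\bar\rho_\ell : \Gal(\overline\Q/\Q) \longrightarrow GL_2(\mathbb{F}_\ell)
\]
attached to $E$. Let $G$ denote the image of $\bar\rho_\ell$, and set
\[
S = \{\, p \text{ prime} : p \nmid \ell N_E \text{ and } a_p \not\equiv 2 \pmod \ell \,\}.
\]

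To verify that any $\chi \in \X(\ell)$ with conductor $\f$ supported on $S$ satisfies $L(E,1,\chi)\neq 0$, note that the CRT decomposition writes $\chi = \prod_{p \mid \f} \chi_p$ with each $\chi_p \in \X(\ell)$ of prime conductor $p \in S$ (order-$\ell$ characters exist modulo such $p$ since automatically $p \equiv 1 \pmod \ell$). Iterating the Corollary starting from the trivial character $\chi_0$ (so that $L^{\text{alg}}(E,1,\chi_0) = L^{\text{alg}}(E,1)$), and using $\delta(p) = 1$ for $p \nmid N_E$, yields
\[
L^{\text{alg}}(E, 1, \chi) \equiv L^{\text{alg}}(E, 1) \prod_{p \mid \f}(a_p - 2) \pmod{\mathfrak l}.
\]
By the hypothesis $L^{\text{alg}}(E,1)\not\equiv 0\pmod\ell$ and the definition of $S$, the right-hand side is a unit mod $\mathfrak l$, whence $L(E,1,\chi) \neq 0$.

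It then remains to show that $S$ has positive natural density. By the Chebotarev density theorem applied to $\bar\rho_\ell$, the density of primes $p$ with $a_p \equiv 2 \pmod\ell$ equals the proportion $|\{g \in G : \Tr(g) \equiv 2 \bmod \ell\}| / |G|$, so it suffices to exhibit a single $g \in G$ with $\Tr(g) \not\equiv 2 \pmod \ell$. The composition $\det\circ\bar\rho_\ell$ is the mod-$\ell$ cyclotomic character, which surjects onto $\mathbb{F}_\ell^*$; choose $g \in G$ with $\det(g) = d \neq 1$. If $\Tr(g) \equiv 2 \pmod \ell$, then Cayley--Hamilton gives $\Tr(g^2) = \Tr(g)^2 - 2\det(g) \equiv 4 - 2d \pmod \ell$, and because $\ell$ is odd and $d \not\equiv 1$ this is not $\equiv 2$; we simply replace $g$ by $g^2$. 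Either way $G$ contains an element of trace $\not\equiv 2$, and we are done.

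The principal obstacle is the density step---ensuring that Frobenius traces are not uniformly $\equiv 2 \pmod \ell$. The cyclotomic-character/Cayley--Hamilton argument above settles this cleanly for every odd prime $\ell$, so no appeal to Serre's open image theorem is required, and no case distinction on the size or exceptional shape of $G$ is needed.
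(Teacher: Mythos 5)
Your congruence half is exactly the paper's argument: one application of the Corollary with $\chi=\chi_0$ gives $L^{\text{alg}}(E,1,\chi)\equiv L^{\text{alg}}(E,1)\prod_{p\mid \f}(a_p-2)\bmod{\mathfrak l}$ for $\f$ supported away from $\ell N_E$, and non-vanishing follows. The divergence is in the density step, and that is where there is a genuine gap. The paper's proof asserts (citing Serre) a positive density of primes $p\equiv 1\bmod \ell$ with $a_p\not\equiv 2\bmod\ell$. The congruence $p\equiv 1\bmod\ell$ is not decoration: an order-$\ell$ character has conductor divisible only by primes $\equiv 1\bmod\ell$ (or by $\ell^{2}$), so unless $S$ contains such primes the conclusion ``$L(E,1,\chi)\neq0$ for all $\chi$ of order $\ell$ with $\f$ supported on $S$'' is vacuous; and it is the non-vacuous version that feeds Theorem 3.9 and Theorem A (the count $\gg X/\log X$ of non-vanishing twists of prime conductor). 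Your parenthetical ``automatically $p\equiv1\pmod\ell$'' has the implication backwards: membership in your $S$ does not force $p\equiv1\bmod\ell$; rather, only primes $\equiv1\bmod\ell$ can divide $\f$. Your Chebotarev/Cayley--Hamilton trick produces an element of the mod-$\ell$ image with trace $\not\equiv2$, but with no control on its determinant --- indeed it is built to have determinant $d\neq1$ or $d^{2}$. Since $\det\bar\rho_\ell(\mathrm{Frob}_p)\equiv p\bmod\ell$, what is actually needed is an element of the image with determinant $1$ and trace $\not\equiv2$, i.e.\ a non-unipotent element of $G\cap SL_2(\mathbb{F}_\ell)$, and your argument says nothing about that.

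This cannot be repaired by pure group theory on an arbitrary $G$ with surjective determinant: there are such subgroups of $GL_2(\mathbb{F}_\ell)$ whose intersection with $SL_2(\mathbb{F}_\ell)$ consists entirely of unipotent (hence trace-$2$) elements, namely upper-triangular matrices whose first diagonal entry is $1$; and images of this shape genuinely occur for elliptic curves with a rational point of order $\ell$ (so $\ell\le7$), in which case every good prime $p\equiv1\bmod\ell$ has $a_p\equiv2\bmod\ell$ and the congruence method produces nothing. This is precisely why the paper appeals to Serre's theorem (largeness of the mod-$\ell$ image, so that determinant-one elements of trace $\neq2$ exist) and why Theorem A excludes finitely many primes $\ell$. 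So your closing claim that ``no appeal to Serre's open image theorem is required'' is only true for the literal-but-possibly-vacuous reading of the statement; to prove it in the sense the paper intends and uses, you must produce primes $p\equiv1\bmod\ell$ in $S$, and that step still requires knowledge of the image along the lines of Serre's theorem.
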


\begin{proof}
The  value of the twist  $L^{\text{alg}}(E,1,\chi)$
can only vanish $\bmod {\,\mathfrak l}$ if either
$L^{\text{alg}}(E,1) \equiv 0 \bmod {\mathfrak l}$ or if one of
the factors $(a_p-\delta(p)-1)$ or
$(a_{\ell}-1)(a_{\ell}-\delta(\ell))$ vanishes $\bmod {\,\mathfrak
l}$.

By Serre's theorem, (\cite{S}) we can find a
{\it positive density \/} of primes $p\equiv 1 \bmod \ell$ for
which the factors $(a_p-\delta(p)-1) \not\equiv 0 \bmod \ell$.
\end{proof}

\begin{rem} The following examples show that the condition $L^{\text{alg}}(E,1) \not\equiv 0 \bmod  \ell$
is necessary for the argument proving Theorem 3.7. Consider the elliptic curve $ E= 2534$f$1$ in Cremona's tables \cite{Cr}.
Then  $L^{\text{alg}}(E,1)=18$ but $L^{\text{alg}}(E,1,\chi)=0$ for $\chi$  the character of order $3$ and conductor $\f=37.$
We note that for this curve $a_{37} =4$ so $a_{37}-\delta(37)-1=2 \not\equiv 0 \bmod 3$. Similar examples are afforded by
the elliptic curves 2718d1 twisted by the character of order 3 and conductor 523 and 4229a1 twisted by the character of order 3 and 
conductor 43. We would like to thank the referee for pointing out the need for such examples.
\end{rem}

\begin{thm}
Let $E/\Q$ be an elliptic curve such that
$L(E,1) \neq 0$. Then for all but a finite number of primes $\ell$,
there is a set of primes  $S_{\ell}$ of positive density such that
$L(E,1,\chi) \neq 0$ for all characters $\chi$ of order $\ell$ with 
conductor $\f$ supported on $S_{\ell}$.
\end{thm}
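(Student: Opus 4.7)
The plan is to reduce this statement directly to Theorem~\ref{thm:non_vanishing_of_L^alg}. First I observe that, in the notation of Section~3, the algebraic part of $L(E,1)$ at the trivial character is
\[
L^{\text{alg}}(E,1) \;=\; \frac{2L(E,1)}{\Omega} \;=\; \Lambda(0,1),
\]
which lies in $\Z$ by the integrality statement of Proposition~3.4 (together with the normalisation of $\Omega=\Omega^{+}$ chosen there so that the $\Lambda^{+}(a,\f)$ are integers of greatest common divisor $1$). The hypothesis $L(E,1)\neq 0$ therefore implies that $L^{\text{alg}}(E,1)$ is a \emph{nonzero} integer.

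A nonzero integer is divisible by only finitely many primes, so there exists a finite set $T$ of primes such that $L^{\text{alg}}(E,1)\not\equiv 0\bmod\ell$ for every prime $\ell\notin T$. Fixing any such $\ell$, Theorem~\ref{thm:non_vanishing_of_L^alg} immediately furnishes a set $S_\ell$ of primes of positive density with the property that $L(E,1,\chi)\neq 0$ for every character $\chi\in\X(\ell)$ whose conductor $\f$ is supported on $S_\ell$. This is precisely the required conclusion.

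In short, no serious obstacle arises: the present theorem is essentially a corollary of Theorem~\ref{thm:non_vanishing_of_L^alg}, and the only new ingredient is the elementary observation that the analytic non-vanishing $L(E,1)\neq 0$ rules out $\ell$-divisibility of the integer $L^{\text{alg}}(E,1)$ for all but finitely many $\ell$.
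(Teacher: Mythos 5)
Your proposal is correct and follows exactly the route the paper intends: since $L^{\text{alg}}(E,1)=2L(E,1)/\Omega=\Lambda(0,1)$ is an integer by the integrality of the modular symbols, the hypothesis $L(E,1)\neq 0$ makes it a nonzero integer, hence nonvanishing modulo $\ell$ for all but finitely many primes $\ell$, and Theorem~\ref{thm:non_vanishing_of_L^alg} then supplies the positive-density set $S_\ell$. This is precisely the (implicit) argument by which the paper deduces the statement, so there is nothing to add.
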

} 

The statement of Theorem~\ref{thm:C} follows immediately.

\section{Vanishing Twists and Rational Points of an Auxiliary Variety}

Kato's result \cite{Kato} generalizing Kolyvagin's theorem shows
that if the $\chi$-component of $E(K_{\chi})$ has positive rank, then
$L(E,1,\chi)=0$. 
Thus, the algebro-geometric version of our question is whether the 
$\chi$-component $E(K_{\chi})^{\chi}$ of $E(K_{\chi})$ has positive 
rank or not, as $K_{\chi}$ ranges over the corresponding cyclic 
extensions of $\Q$. 
In order to find points on $E$ defined over some cyclic 
extension $K_{\chi}$, we will
define an auxiliary variety of higher dimension whose $\Q$-rational points
correspond to points on $E$ defined over some cyclic extensions. 
As we are interested in cubic extensions, we construct this variety starting from $E^{3}=E\times E\times E$.

Let $k$ be a number field, and let $\bar k$ be its algebraic closure, which we fix once and for all.  We denote by $G_{k}=\Gal(\bar k/k)$, the Galois group of $\bar k/k$.  Throughout this section, a point means a geometric point, i.e.,a $\bar k $-valued point. 

The symmetric group $\mathfrak{S}_{3}$ acts on $E^{3}$ in an obvious way.  Its alternating subgroup $\mathfrak{A}_{3}$ is a cyclic group of order~$3$ generated by the automorphism $\rho$ given by 
\[
\begin{array}{rcccc}
\rho& : &E\times E  \times E &\longrightarrow & E\times E \times E \\
&& (P,Q,R)&\longmapsto & (Q,R,P).
\end{array}
\]
Let $X=E^{3}/\mathfrak{A}_{3}$ be the quotient variety  (cf.\ \cite[Ch.~II, \S7 and
Ch.~III, \S12)]{mumford:abelian-varieties}).   Since the action of $\mathfrak{A}_{3}$ commutes with the Galois action of $E^{3}$, $X$ is a variety defined over $k$.  We denote by $[P,Q,R]$ the class of $(P,Q,R)$ in $X$.  

Let $D$ be the diagonal subgroup $\{(P,P,P)\mid P\in E\}$  in $E^{3}$.  We define the complement $\widehat D$ of $D$ in $E^{3}$ by 
\[
\widehat D = \{(P,Q,R)\mid P+Q+R = O\}.
\]
$\widehat D$ is a subgroup of $E^{3}$ invariant under the action of $\rho$.  We have $(D \times {\widehat D})/\mathfrak{A}_{3}=D\times ({\widehat D}/\mathfrak{A}_{3})\isom E\times ({\widehat D}/\mathfrak{A}_{3})$.  Moreover, we have a degree $3$ isogeny $\ph:E^{3}\to D\times \widehat D$ given by
\begin{alignat*}4
&\ph\ :&&\qquad E^{3} & \ &\longrightarrow \ & D&\times {\widehat D}
\\
&\null&&(P,Q,R) &&\longmapsto &
\textstyle \bigl(
(S,S,S) &,
\textstyle (3P - S,3Q - S,3R - S)\bigr),
\end{alignat*}
where $S=P+Q+R$.  Its dual isogeny $\ph': D\times \widehat D\to E^{3}$ is given 
\begin{alignat*}4
&\ph'\ :& D&\times {\widehat D} & \ &\longrightarrow &\ &\qquad\qquad E^{3} \\
& &
\bigl((S,S,S)&,(P,Q,R)\bigr)
&&\longmapsto &&
(P+S,Q+S,R+S). 
\end{alignat*}
Since $\ph$ and $\ph'$ commute with the $\mathfrak{A}_{3}$-action, we take the
quotients by the $\mathfrak{A}_{3}$-action and obtain two maps
\[
\bar\ph : X \to E\times ({\widehat D}/\mathfrak{A}_{3}), \qquad
\bar\ph' :E\times ({\widehat D}/\mathfrak{A}_{3}) \to X
\]
such that $\bar\ph'\circ\bar\ph$ is the map $\overline{[3]}$
induced from the multiplication-by-$3$ map $[3]$ of $E^{3}$.
The quotient  $\widehat D/\mathfrak{A}_{3}$ is a surface, which 
we denote  by $\overline{S}_{E}$.

\begin{lem}\label{prop:rat_pts_on_SE}
A point $[P,Q,R]$ in $\overline{S}_{E}$ is a $k$-rational point if and only if one of the following is satisfied:
\begin{enumerate}
\item $P$, $Q$  and $R$ are all defined over $k$, and $P+Q+R=O$,
 or 
\item $P$, $Q$  and $R$ are defined over a certain cyclic cubic extension $K/k$, and for a suitable generator $\sigma\in \Gal(K/k)$, we have $(P^{\sigma},Q^{\sigma},R^{\sigma})=(Q,R,P)$, together with the relation $P+P^{\sigma}+P^{\sigma^{2}}=O$.
\end{enumerate} 
\end{lem}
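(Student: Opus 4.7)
The plan is to translate the $k$-rationality of the class $[P,Q,R] \in \overline{S}_{E}$ into a Galois-theoretic statement about the lift $(P,Q,R) \in \widehat D(\bar k)$. A $\bar k$-point of the geometric quotient $\widehat D/\mathfrak{A}_{3}$ is $k$-rational if and only if its $G_{k}$-orbit lies in a single $\mathfrak{A}_{3}$-orbit. Thus $[P,Q,R]$ is $k$-rational precisely when, for each $\sigma \in G_{k}$, there exists $i(\sigma) \in \{0,1,2\}$ with
\[
(P^{\sigma}, Q^{\sigma}, R^{\sigma}) = \rho^{i(\sigma)}(P,Q,R).
\]

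First I would dispose of the degenerate case where the $\mathfrak{A}_{3}$-stabilizer of $(P,Q,R)$ is nontrivial: this forces $P=Q=R$, and combined with $P+Q+R=O$ gives $P \in E[3]$, so the $k$-rationality of $[P,P,P]$ reduces to the $k$-rationality of the single point $P$, a special instance of case~(1). In the generic case the stabilizer is trivial, so $i(\sigma)$ is uniquely determined, and a direct check using the compatibility of the Galois and $\mathfrak{A}_{3}$-actions shows that $i : G_{k} \to \mathfrak{A}_{3} \isom \Z/3\Z$ is a continuous homomorphism.

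Next, let $K = \bar k^{\ker i}$, so that $i$ factors through $\Gal(K/k)$. If $i$ is trivial, every $\sigma \in G_{k}$ fixes $P$, $Q$, $R$ individually, yielding $P,Q,R \in E(k)$ and condition~(1). Otherwise $K/k$ is cyclic of degree~$3$; choosing the generator $\sigma \in \Gal(K/k)$ with $i(\sigma)=1$ gives $(P^{\sigma},Q^{\sigma},R^{\sigma})=(Q,R,P)$, while $\ker i = G_{K}$ fixes $P$, $Q$, $R$ componentwise. Hence $P,Q,R \in E(K)$ with $Q=P^{\sigma}$ and $R=P^{\sigma^{2}}$, and the relation $P+P^{\sigma}+P^{\sigma^{2}}=O$ is inherited from $\widehat D$, which is exactly condition~(2).

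The converse is a direct verification: in case~(1), $(P,Q,R) \in \widehat D(k)$, so its class in $\overline{S}_{E}$ is automatically $k$-rational; in case~(2), the prescribed cyclic Galois action on $(P,Q,R)$ agrees with the action of $\rho$, so the class $[P,Q,R]$ is $G_{k}$-invariant. The main (and essentially only) nontrivial point is the stabilizer analysis that makes $i$ a well-defined homomorphism; once $i$ is in hand, everything else is a bookkeeping exercise with its kernel and fixed field.
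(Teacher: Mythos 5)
Your proposal is correct and follows essentially the same route as the paper's proof: translate $k$-rationality of $[P,Q,R]$ into the condition that each Galois conjugate of $(P,Q,R)$ is an $\mathfrak{A}_{3}$-translate, handle the degenerate case $P=Q=R$ separately, and in the generic case use the homomorphism $G_{k}\to\langle\rho\rangle$ (your $i$, the paper's $\psi$) whose kernel cuts out $K$, giving cases (1) and (2) according to whether the image is trivial or of order $3$. The only difference is cosmetic: you justify well-definedness of $i$ via the trivial-stabilizer observation, which the paper phrases as the triples being mutually distinct.
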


\begin{proof}
It is clear that if (1) or (2) is satisfied, then $[P,Q,R]$ is stable under the Galois action, and thus $[P,Q,R]\in \overline{S}_{E}(k)$.

Conversely, suppose $[P,Q,R]\in \overline{S}_{E}(k)$.  This is equivalent to say that, for any $\sigma\in G_{k}$, the conjugate $(P^{\sigma}, Q^{\sigma}, R^{\sigma})$ coincides with $(P,Q,R)$, $\rho((P,Q,R))=(Q,R,P)$, or $\rho^{2}((P,Q,R))=(R,P,Q)$.   

First, suppose that the triples $(P,Q,R)$, $(Q,R,P)$ and $(R,P,Q)$ are mutually distinct.  Then, we can define a map $\psi:G_{k}\to \<\rho\>$ by stipulating that 
$(P^{\sigma}, Q^{\sigma}, R^{\sigma}) = \psi(\sigma)((P,Q,R))$.   
Since the automorphism $\rho$ is defined over $k$, the map $\psi$ is a homomorphism.  Let $K$ be the Galois extension of $k$ corresponding to $\Ker\psi$ via Galois theory.  Then $\Gal(K/k)$ is isomorphic to a subgroup of $ \<\rho\>$, that is $\Gal(K/k)\isom  \<\rho\>$, or $\Gal(K/k)=\{\text{id}\}$.  
If $\Gal(K/k)=\{\text{id}\}$, then $K=k$, and $P$, $Q$, $R$ are all defined over $k$.  This is the case (1).  If $\Gal(K/k)\isom  \<\rho\>$, then $K$ is a cyclic cubic extension.  Let $\sigma\in \Gal(K/k)$ be the element that maps to $\rho$ by $\psi$.  Then we have $(P^{\sigma},Q^{\sigma},R^{\sigma})=(Q,R,P)$.  This is the case (2). 

Next, if $(P,Q,R)$, $(Q,R,P)$ and $(R,P,Q)$ are not mutually distinct, then we must have $P=Q=R$.  In this case, for any $\sigma\in G_{k}$, the conjugate $(P^{\sigma}, P^{\sigma}, P^{\sigma})$ must equal $(P,P,P)$.  Thus, this case is included in the case~(1).
\end{proof}

In order to give a concrete description of $\widehat D/\mathfrak{A}_{3}$, 
we fix a Weierstrass model of $E$ and consider it as a curve in 
$\P^{2}$.  Namely, suppose that $E$ is given by the equation
\begin{equation*}
E: y^{2}z+ a_{1}xyz + a_{3}yz^{2} 
= x^{3} + a_{2}x^{2}z + a_{4}xz^{2} + a_{6}z^{3}.
\end{equation*}
Then, as is well known, three points $P$, $Q$ and $R$ satisfy
$P+Q+R=O$ if and only if $P$, $Q$ and $R$ are collinear. Let
$(\P^{2})^{*}$ be the dual space of $\P^{2}$, namely, the space of all
the lines in $\P^{2}$. For a point $(P, Q, R)\in\widehat D$ we denote by
$\ell_{PQR}$ the line passing through $P$, $Q$ and $R$. If $P=Q=R$, we
understand that $\ell_{PPP}$ is the tangent line to $E$ passing through $P$.
Consider the map
\[
\begin{array}{rccc}
\pi_{0}:  & \widehat D &\longrightarrow &(\P^{2})^{*} \\
 & (P,Q,R) &\longmapsto &\ell_{PQR}.
\end{array}
\]
It is clear
that $\pi_{0}$ is a surjection and is invariant under the $\mathfrak{S}_{3}$-action.
Thus we obtain an isomorphism 
$\widehat D/\mathfrak{S}_{3}\overset{\simeq}{\to} (\P^{2})^{*}$, which sends a class
$[P,Q,R]$ of $\widehat D/\mathfrak{S}_{3}$ to the line $\ell_{PQR}$.
Now, $\pi_{0}$ induces the map
\[
\pi_{1}: \overline{S}_{E}=\widehat D/\mathfrak{A}_{3} 
\longrightarrow \widehat D/\mathfrak{S}_{3}\simeq(\P^{2})^{*}.
\]
$\pi_{1}$ is a covering map of degree~$2=[\mathfrak{S}_{3}:\mathfrak{A}_{3}]$.
It is easy to
see that $\pi_{1}^{-1}(\ell_{PQR})$ consists of two classes,
$[P,Q,R]$ and $[P,R,Q]$. In $\overline{S}_{E}$ the classes $[P,Q,R]$ and
$[P,R,Q]$ coincide if and only
if at least two of three points coincide. In other words $[P,Q,R]=[P,R,Q]$ if and only
if $\ell_{PQR}$ is a tangent line to the curve $E$. This implies that
the double covering $\pi_{1}$ ramifies along
the dual curve $E^{*}=\{L\in(\P^{2})^{*}\mid \text{$L$ is tangent to $E$}\}$. $E^{*}$ is an irreducible curve of degree~$6$, 
and it has nine cusps corresponding to the tangent lines at nine
inflection points of $E$.
The surface $\overline{S}_{E}$ thus has nine singular points of type $A_{2}$. 
Let $S_{E}$ be the minimal desingularization of $\overline{S}_{E}$ obtained by blowing up twice at each singular points. 
Summing all up, we have

\begin{prop}
The quotient surface $\overline{S}_{E}=\widehat D/\mathfrak{A}_{3}$ may be
regarded as a double cover of the dual projective plane $(\P^{2})^{*}$ ramifying
along the dual curve $E^{*}$ of $E$, which is an irreducible curve of degree~$6$. 
As a consequence the minimal desingularization $S_{E}$ of $\overline{S}_{E}$ is 
a $K3$ surface. \qed
\end{prop}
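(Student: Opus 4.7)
The plan is to prove the two remaining claims---that $E^{*}$ is irreducible of degree~$6$ and that $S_{E}$ is a $K3$ surface---since the preceding discussion already exhibits $\pi_{1}:\overline{S}_{E}\to (\P^{2})^{*}$ as a double cover with branch locus $E^{*}$ and identifies the nine $A_{2}$-type singularities of $\overline{S}_{E}$ lying over the nine cusps of $E^{*}$.

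For the degree of $E^{*}$ I would count tangent lines to $E$ through a generic point $p\in\P^{2}$. Projection from such a $p$ yields a degree-$3$ morphism $\pi_{p}:E\to\P^{1}$ whose ramification points are precisely the $P\in E$ with $p\in T_{P}E$. Riemann--Hurwitz gives
\[
0 \;=\; 2g(E)-2 \;=\; 3\bigl(2g(\P^{1})-2\bigr)+\deg R \;=\; -6 + \deg R,
\]
so $\deg R=6$; for generic $p$ each ramification point is simple, yielding six distinct tangent lines through $p$, whence $\deg E^{*}=6$. Irreducibility is automatic because $E^{*}$ is the image of the irreducible curve $E$ under the Gauss map $P\mapsto\ell_{PPP}$.

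For the $K3$ property I would apply the standard cohomology of a double cover of $\P^{2}$ branched along a sextic. Writing $L=\mathcal{O}_{(\P^{2})^{*}}(3)$, the isomorphism $\mathcal{O}(E^{*})\isom L^{\otimes 2}$ forces $\pi_{1,*}\mathcal{O}_{\overline{S}_{E}}\isom \mathcal{O}_{(\P^{2})^{*}}\oplus L^{-1}$; since $\pi_{1}$ is finite, the Leray spectral sequence degenerates and gives $h^{1}(\overline{S}_{E},\mathcal{O}_{\overline{S}_{E}})=h^{1}(\P^{2},\mathcal{O})+h^{1}(\P^{2},\mathcal{O}(-3))=0$. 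On the smooth locus of $\overline{S}_{E}$ the double-cover formula gives
\[
K_{\overline{S}_{E}}\;\sim\; \pi_{1}^{*}\bigl(K_{(\P^{2})^{*}}+\tfrac12 E^{*}\bigr)\;=\;\pi_{1}^{*}(-3H+3H)\;=\;0.
\]
The nine singular points of $\overline{S}_{E}$ are rational double points of type $A_{2}$, so their minimal resolution is crepant and preserves $h^{i}(\mathcal{O})$. Hence $K_{S_{E}}\sim 0$ and $h^{1}(S_{E},\mathcal{O}_{S_{E}})=0$, which makes $S_{E}$ a $K3$ surface.

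The main obstacle is the local computation at each cusp of $E^{*}$, needed to justify that $\overline{S}_{E}$ has an $A_{2}$ singularity there (and not something more complicated). Concretely, $E^{*}$ has local equation $u^{2}-v^{3}=0$ at each cusp, so the double cover has local equation $w^{2}=u^{2}-v^{3}$---the classical $A_{2}$ normal form---resolved by two successive blowups, consistent with the earlier assertion that $S_{E}$ is obtained by blowing up twice at each singular point of $\overline{S}_{E}$. Once this crepancy is verified, the cohomological computations transfer verbatim from $\overline{S}_{E}$ to $S_{E}$.
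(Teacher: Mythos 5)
Your proposal is correct and follows the same route the paper takes: the paper's proposition is just a summary (hence the immediate \qed) of the preceding discussion exhibiting $\pi_{1}:\overline{S}_{E}\to(\P^{2})^{*}$ as a double cover branched along the sextic dual curve with nine cusps giving nine $A_{2}$ points, and you simply fill in the standard details it leaves implicit (degree $6$ of $E^{*}$ via Riemann--Hurwitz for projection from a generic point, the $w^{2}=u^{2}-v^{3}$ local form at the cusps, and the double-cover computation $K\sim 0$, $q=0$ transferred through the crepant resolution). These verifications are consistent with, and complete, the paper's argument.
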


\begin{rem}
If the quotient of an abelian surface $A$ by a finite group $G$ is birational 
to a $K3$ surface, its minimal 
desingularization is called a generalized Kummer surface.  $S_{E}$ is thus a generalized Kummer surface.
For more about Kummer surfaces see Katsura \cite{katsura} and Bertin \cite{bertin}. 
\end{rem}

Write the equation of a generic line $\ell$ in $\P^{2}$ in the form $y=tx+u$,
using parameters $t$ and $u$.
The function field of $(\P^{2})^{*}$ is then given by $k(t,u)$, and 
the function field of $\widehat D$ can be regarded as the splitting field of
the cubic equation in $x$ obtained by substituting $y=tx+u$ in the affine
Weierstrass equation
\begin{equation*}
y^{2}+ a_{1}xy + a_{3}y
= x^{3} + a_{2}x^{2} + a_{4}x  + a_{6}.
\end{equation*}
The function field of $\overline{S}_{E}=\widehat D/\mathfrak{A}_{3}$ is then the extension of $k(t,u)$ obtained by adding the square root of the discriminant $\Delta(u,t)$ of this cubic equation with respect to~$x$.  This implies that the surface $S_{E}$ is a minimal model of the surface defined by the equation
\begin{equation}\label{S_E_affine}
\delta^{2}=\Delta(u,t).
\end{equation}
For simplicity we write the explicit result only in the 
case where $a_{1}=a_{2}=a_{3}=0$, $a_{4}=A$, and $a_{6}=B$.

\begin{prop}
Let $y^{2}=x^{3}+Ax+B$ be a Weierstrass equation for $E$.  Then, the generalized Kummer surface $S_{E}$ is birational to the affine surface in $\A^{3}=\{(t,u,\delta)\}$ defined
by the equation
\begin{multline}\label{quartic}
	\delta^{2} = -27 u^{4} - 4 t^{3} u^{3} - (30 A t^{2} - 54 B) u^{2} 
	- 4 t ( A t^{4} - 9 t^{2} - 6 A^{2} ) u \\
	+ 4 B t^{6} + A^{2} t^{4} - 18 A B t^{2} - ( 4 A^{3} + 27 B^{2} ).\qed
\end{multline}
\end{prop}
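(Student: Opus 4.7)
The plan is to derive equation~(\ref{quartic}) by a direct calculation, relying on the birational description of $S_{E}$ already established in the paragraph preceding the proposition. There we observed that $S_{E}$ is birational to the affine surface $\delta^{2} = \Delta(u,t)$, where $\Delta(u,t)$ is the discriminant, with respect to $x$, of the cubic obtained by substituting $y = tx + u$ into the affine Weierstrass equation. It therefore suffices to compute this discriminant explicitly in the special case $a_{1}=a_{2}=a_{3}=0$, $a_{4}=A$, $a_{6}=B$.

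First I would carry out the substitution: putting $y = tx + u$ into $y^{2} = x^{3} + Ax + B$ and rearranging yields the monic cubic in $x$
\[
x^{3} - t^{2}x^{2} + (A - 2tu)\,x + (B - u^{2}) = 0.
\]
Next I would apply the standard discriminant formula for a monic cubic $x^{3} + px^{2} + qx + r$, namely
\[
\Delta \;=\; 18pqr - 4p^{3}r + p^{2}q^{2} - 4q^{3} - 27r^{2},
\]
specialized to $p = -t^{2}$, $q = A - 2tu$, $r = B - u^{2}$. Each of the five summands is a polynomial in $t$ and $u$: the contributions to the powers of $u$ come, respectively, from $(B-u^{2})$, $(B-u^{2})^{2}$, $(A-2tu)^{2}$, $(A-2tu)^{3}$ and the cross term $(A-2tu)(B-u^{2})$. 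I would expand each, then collect by decreasing powers of $u$, obtaining a polynomial of total degree $4$ in $u$ whose coefficients match those displayed in~(\ref{quartic}).

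The only genuine obstacle is arithmetic bookkeeping: roughly twenty monomials must be combined, and the binomial expansions of $(A-2tu)^{3}$ and $(B-u^{2})^{2}$ invite sign errors. The verification is entirely mechanical, however, and can be double-checked with a symbolic algebra system; no further conceptual input is required, since the identification of $S_{E}$ with the double cover of $(\P^{2})^{*}$ branched along~$E^{*}$ has already done all the geometric work.
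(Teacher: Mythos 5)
Your approach is exactly the paper's: the \qed\ indicates that the proposition is regarded as following from the preceding identification of the function field of $\overline{S}_{E}$ with $k(t,u,\sqrt{\Delta(u,t)})$, so the entire content is the discriminant computation for $x^{3}-t^{2}x^{2}+(A-2tu)x+(B-u^{2})$ that you describe, and your setup ($p=-t^{2}$, $q=A-2tu$, $r=B-u^{2}$ in the standard cubic discriminant formula) is correct. One caveat: carrying out the expansion gives the linear-in-$u$ term $-4t(At^{4}-9Bt^{2}-6A^{2})u$, so your assertion that the coefficients ``match those displayed'' collides with a typographical slip in \eqref{quartic} (the $-9t^{2}$ there should be $-9Bt^{2}$, as a weighted-homogeneity check confirms); this is an error in the displayed formula, not in your method.
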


\begin{rem}
The surface $S_{E}$ possesses two obvious involutions, 
$[P,Q,R]\mapsto [Q,P,R]$ and $[P,Q,R]\mapsto [-P,-Q,-R]$. In terms of the 
equation~\eqref{quartic}, the former corresponds to 
$(t,u,\delta)\mapsto(t,u,-\delta)$, while the latter corresponds to 
$(t,u,\delta)\mapsto(-t,-u,\delta)$.
\end{rem}

Consider the map $\nu:E\to \overline{S}_{E}$ given by $P\mapsto
[P,-P,O]$.  This is an injection, and we have an embedding
$\tilde\nu:E\to S_{E}$.  Let $D_{\tilde\nu(E)}$ be the divisor associated with
the image of $\tilde\nu$.  Then the complete linear system $|D_{\tilde\nu(E)}|$
determines a pencil of curves of genus~$1$.  Let $\bar\pi:\overline{S}_{E}\to \P^{1}$
be the map  associated with the projection $(t,u,\delta)\mapsto t$.
The fiber at $t=\infty$ corresponds exactly the image of the embedding 
$P\to [P,-P,O]$, and thus the fibration $\pi$ coincides with the pencil above.
Let  $\pi:S_{E}\to \P^{1}$ be the elliptic fibration obtained in this way.  

Let $C_{t}$ be the generic fiber of $\pi$, that is, the curve of genus~$1$ 
defined over the function field $k(t)$ given by the equation~\eqref{quartic}.

The coefficient of $u^{4}$ in the right-hand side of \eqref{quartic} 
is constant, namely, $-27$.  Thus, the curve $C_{t}$
has two points at infinity defined over $k(\sqrt{-3})$.  In other 
words, if $k$ contains $\sqrt{-3}$, $C_{t}$ has a $k(t)$-rational point and 
it is an elliptic curve over $k(t)$.  However, if $k$ does not contain 
$\sqrt{-3}$, we do not know if $C_{t}$ has a $k(t)$-rational point, and whether
we can consider it as an elliptic curve.  
Instead, we need to consider its Jacobian $J_{t}$.

Using an algorithm for calculating an equation of the Jacobian of the
curve given by a quartic equation (see Connell\cite{connell:quartic}),
we see that $J_{t}$ is given by the equation
\begin{multline*}
J_{t} \ : \ 
Y^{2} = X^{3} + \bigl( A t^{8} + 18 B t^{6} -18 A^{2} t^{4} - 54 A B t^{2}
- 27 ( A^{3} + 9 B^{2} ) \bigr) X \\
+ \bigl( B t^{12} - 4 A^{2} t^{10} - 45 A B t^{8} - 270 B^{2} t^{6} 
+ 135 A^{2} B t^{4} \\
- 54 A ( 2 A^{3} + 9 B^{2} ) t^{2} 
- 243 B ( A^{3} + 6 B^{2} ) \bigr).
\end{multline*}

\begin{prop}\label{prop:jacobian}
The elliptic surface associated with the curve $J_{t}$
has eight singular fibers of type \textup{I$_{3}$}
located at $t$ satisfying 
\begin{equation}\label{bad-locus}
t^{8} + 18 A t^{4} + 108 B t^{2} - 27 A^{2} = 0.
\end{equation}
The Mordell-Weil group $J_{t}(\kbar(t))$ contains a
point of infinite order $\gamma_{1}$ given by
\[
\gamma_{1} = \Big(-\frac{1}{27} t^{6} + 5 A t^{2} - 9 B,
\frac{\sqrt{-3}}{243} t ( t^{8} + 162 A t^{4} - 2916 B t^{2} -2187 A^{2})
\Big).
\]
\end{prop}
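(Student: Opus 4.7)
The plan is to verify the stated Weierstrass equation of $J_{t}$ in three stages: the discriminant and location of singular fibers, the Tate--algorithm identification of the fiber type as I$_{3}$, and the existence and infinite order of the section $\gamma_{1}$.

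First I would compute the discriminant $\Delta(t)$ of the given Weierstrass equation $J_{t}:Y^{2}=X^{3}+P(t)X+Q(t)$ from the standard formula $\Delta=-16(4P(t)^{3}+27Q(t)^{2})$. The expected outcome is an identity in $A$, $B$, $t$ of the form
$$\Delta(t) \;=\; c\cdot\bigl(t^{8}+18At^{4}+108Bt^{2}-27A^{2}\bigr)^{3}$$
for a nonzero constant $c$; this is a polynomial identity that can be verified term by term. The cube exponent together with the degree $8$ of the bracket accounts for exactly $24=3\cdot 8$ zeros of $\Delta$ counted with multiplicity, which is precisely the degree of the discriminant of an elliptic $K3$ surface over $\P^{1}$. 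Consequently this single factorization both locates all singular fibers and rules out any further ones.

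Second, at each (simple) root $t_{0}$ of the polynomial in \eqref{bad-locus} I would run Tate's algorithm. Since $c_{4}=-48P(t)$, I would first show that $P(t)$ and $t^{8}+18At^{4}+108Bt^{2}-27A^{2}$ share no common root (a resultant computation), so that $v_{t_{0}}(c_{4})=0$ while $v_{t_{0}}(\Delta)=3$. The conditions $v(c_{4})=0$ and $v(\Delta)=n>0$ are precisely the defining conditions for reduction of type I$_{n}$, so each $t_{0}$ gives an I$_{3}$ fiber, and the total count of eight comes from $\deg(t^{8}+18At^{4}+108Bt^{2}-27A^{2})=8$.

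Third I would confirm that $\gamma_{1}=(X_{1},Y_{1})$ lies on $J_{t}$ by substituting the given $X_{1}=-\tfrac{1}{27}t^{6}+5At^{2}-9B$ and $Y_{1}=\tfrac{\sqrt{-3}}{243}t(t^{8}+162At^{4}-2916Bt^{2}-2187A^{2})$ into the equation and expanding. The appearance of $\sqrt{-3}$ in $Y_{1}$ is consistent with the discussion before the statement, where the two points at infinity of the quartic model of $C_{t}$ are defined over $k(\sqrt{-3})$, so $\gamma_{1}$ is naturally a section over $\kbar(t)$. To show $\gamma_{1}$ has infinite order, I would specialize $t$ to a convenient algebraic value $t_{0}$ for which the fiber $J_{t_{0}}$ is a smooth elliptic curve with a small torsion subgroup, and check that none of the finitely many possible multiples $[n]\gamma_{1}(t_{0})$ vanish; Silverman's specialization theorem then implies $\gamma_{1}$ itself is non-torsion on the generic fiber. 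Alternatively, one can compute the canonical height $\hat h(\gamma_{1})$ via the Shioda height pairing, using the local contributions from the eight I$_{3}$ fibers, and show that it is strictly positive.

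The main obstacle I expect is the bulk of the discriminant factorization and the subsequent Tate-algorithm check at each of the eight bad fibers: both are purely mechanical but involve sizeable polynomial manipulations in $A$, $B$, and $t$. Once the factorization $\Delta(t)=c(t^{8}+18At^{4}+108Bt^{2}-27A^{2})^{3}$ is in hand, the fiber classification, the verification that $\gamma_{1}$ lies on $J_{t}$, and the specialization argument for infinite order are all routine.
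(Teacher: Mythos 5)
Your outline is sound and, for the most part, runs parallel to the paper's (very terse) proof. For the fibers the paper simply says the types are easy to determine with Tate's algorithm; your explicit plan --- factor $-16\bigl(4P(t)^{3}+27Q(t)^{2}\bigr)=c\,\bigl(t^{8}+18At^{4}+108Bt^{2}-27A^{2}\bigr)^{3}$, check by a resultant that $c_{4}$ does not vanish at the roots so that $v(c_{4})=0$, $v(\Delta)=3$ gives I$_{3}$, and use the degree count $24$ to exclude a fiber at $t=\infty$ --- is exactly the computation behind that remark (and the factorization is correct; it checks out, e.g., for $A=0$, $B=1$). For the point you diverge: the paper does not verify $\gamma_{1}$ by substitution but produces it structurally, using Connell's algorithm to write the isomorphism $C_{t}\cong J_{t}$ over $k(\sqrt{-3})$ and observing that it sends one point at infinity of the quartic model to the origin and the other to $\gamma_{1}$; your direct substitution is a perfectly adequate, if less illuminating, substitute, since the proposition only asserts membership in $J_{t}(\kbar(t))$. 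For infinite order, your second alternative --- the canonical height via the Shioda pairing with the local contributions at the I$_{3}$ fibers --- is precisely the paper's route (it cites Kuwata's height algorithm for elliptic $K3$ surfaces and reports a positive height).

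Two caveats. Your first alternative for infinite order (specialize $t=t_{0}$ and ``check the finitely many possible multiples'') is shaky as written: the group in question is $J_{t}(\kbar(t))$ and $A,B$ are the coefficients of an arbitrary $E/k$, so a specialized fiber is an elliptic curve over $\kbar$ (where torsion is unbounded) or over a number field whose degree varies with $k$; ``small torsion subgroup'' is therefore not a once-and-for-all finite check, and any such verification must be uniform in $A$ and $B$ --- which is exactly what the height computation supplies. So treat specialization as a sanity check and the Shioda/Kuwata height as the actual proof. Second, a caveat shared with the paper's statement: for special curves (e.g.\ $A=0$) the octic \eqref{bad-locus} has a repeated root and the configuration degenerates (an I$_{6}$ at $t=0$ instead of two I$_{3}$'s), so ``eight I$_{3}$ fibers'' requires the octic to be separable; your parenthetical ``(simple)'' quietly assumes this. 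This does not affect the existence and infinite order of $\gamma_{1}$, which is all that is used in the sequel.
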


\begin{proof}
It is easy to determine the singular fibers using Tate's algorithm. 
Over $k(\sqrt{-3})$, $C_{t}$ and $J_{t}$ are isomorphic.
Using an algorithm in \cite{connell:quartic}, we can write an 
isomorphism which send one of the two points at infinity on $C_{t}$ to 
the origin of $J_{t}$ and the other to $\gamma_{1}$.
Using an algorithm in \cite{kuwata:can_height}, we calculate the canonical
height of $\gamma_{1}$, which turns out to be~$3$.  This implies that
it has infinite order.
\end{proof}

\begin{rem}
We note that, if $E$ does not have complex multiplication, then $J_{t}(\kbar(t))$ is
isomorphic to
\[
\Z\oplus\Z/3\Z\oplus\Z/3\Z,
\]
and $J_{t}(\kbar(t))/J_{t}(\kbar(t))_{\text{tors}}$ is generated by
$\gamma_{1}$. All the points in $J_{t}(\kbar(t))$ are defined already
over $k(E[3])(t)$.  We omit the proof since we do not need these 
facts in the sequel.
\end{rem}

\section{Vanishing of Cubic Twists}

Let $E$ be an elliptic curve defined over $\Q$.
The goal of this section is to prove Theorem~\ref{thm:A}.
To do so, we will prove its algebro-geometric version
Theorem~\ref{Theorem1a} and then apply Kato's theorem
to conclude the vanishing of the corresponding twisted
$L$-functions.

\begin{thm}\label{Theorem1a}
Let $E$ be an elliptic curve defined over a number field $k$.
Suppose that there is at least one cyclic extension $K_0/k$ of
degree dividing $3$ such that $E(K_0)$ is infinite (possibly
$K_0=k$). Then $\rk E(K_{\lambda})> \rk E(k)$ for infinitely
many cyclic cubic extensions $K_{\lambda}/k$.
\end{thm}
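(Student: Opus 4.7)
The strategy is to reduce the rank-growth statement to producing many $k$-rational points on the $K3$ surface $S_E$ of Section~4, one in each of infinitely many distinct fibers of the elliptic fibration $\pi:S_E\to\P^1$, and to extract the cyclic cubic extensions $K_\lambda$ via Lemma~\ref{prop:rat_pts_on_SE}. The hypothesis furnishes a seed; Silverman's specialization theorem is then the main engine.

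Step~1 (a seed curve in $S_E$). Using the hypothesis, construct a $k$-rational curve $\Gamma\subset S_E$ dominating $\P^1$ via $\pi$. When $K_0=k$, take $P_0\in E(k)$ of infinite order and set $\Gamma:=\mu_{P_0}(E)$, where $\mu_{P_0}:E\to S_E$ is the $k$-morphism $Q\mapsto[P_0,Q,-P_0-Q]$; then $\pi|_\Gamma:\Gamma\to\P^1$ is a double cover, since the line through $P_0$ of slope $t$ meets $E$ in a unique unordered pair $\{Q,-P_0-Q\}$. When $K_0/k$ is properly cyclic cubic with generator $\sigma$, take $P_0\in E(K_0)$ of infinite order; replace $P_0$ by $3P_0-\Tr_{K_0/k}(P_0)$ so that $\Tr_{K_0/k}(P_0)=O$, and let $\Gamma$ be the image of a $k$-rational curve $C$ through $P_0$ in the two-dimensional trace-zero abelian subvariety of $\operatorname{Res}_{K_0/k}E$ under the $k$-morphism $R\mapsto[R,R^\sigma,R^{\sigma^2}]$.

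Step~2 (a non-torsion section). Form the base change $\tilde\pi:\tilde S:=S_E\times_{\P^1}\Gamma\to\Gamma$; the diagonal inclusion $\Gamma\hookrightarrow\tilde S$ is a $k$-rational section that fixes an origin for the generic fiber $J_t\otimes k(\Gamma)$ as an elliptic curve. Combine this origin with the infinite-order $\bar k(t)$-rational point $\gamma_1$ of Proposition~\ref{prop:jacobian} to exhibit a non-torsion $k(\Gamma)$-rational point of the generic fiber. Verify the non-torsion claim by specializing at a well-chosen fiber: in Case~A, at $t=\infty$ the fiber $\pi^{-1}(\infty)$ identifies with $E$ via $\nu:P\mapsto[P,-P,O]$, and the specialization corresponds to $\pm P_0\in E(k)$, non-torsion by hypothesis; in Case~B, one specializes at the slope $t_0\in k$ of the line through $(P_0,P_0^\sigma,P_0^{\sigma^2})$, where the specialization yields a non-zero class in the Mordell--Weil group of the fiber, derived from the infinite-order class of $P_0$ in $E(K_0)/E(k)$.

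Step~3 (Silverman specialization and distinctness). Apply Silverman's specialization theorem to the non-torsion $k(\Gamma)$-section of $\tilde\pi$: for all but finitely many $b\in\Gamma(k)$, the specialization is a non-torsion $k$-rational point of $\pi^{-1}(t(b))\subset S_E$. By Lemma~\ref{prop:rat_pts_on_SE}, each such point arises either from a collinear triple in $E(k)$ (Case~1) or from a triple over a cyclic cubic extension $K_\lambda/k$ (Case~2). A N\'eron--Tate height argument rules out Case~1 for all but finitely many $b$, since Case~1 points come from the finitely generated group $E(k)$ and therefore form a set of bounded height on each fiber, whereas the heights of the specializations grow without bound. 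The same height argument precludes the possibility that all $K_\lambda$ coincide with a single cubic extension $K$: the corresponding triples would then live in the finitely generated trace-zero subgroup of $E(K)$, again of bounded height. Hence infinitely many distinct cyclic cubic extensions $K_\lambda/k$ arise with $\rk E(K_\lambda)>\rk E(k)$. The principal obstacle is Step~2: descending the non-torsion point from the geometric level $\bar k(\Gamma)$ to $k(\Gamma)$, which is exactly where the hypothesis is used in an essential way.
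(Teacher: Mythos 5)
Your construction only has a chance in the case $K_0=k$; the substance of the theorem --- the case where $K_0/k$ is genuinely cubic and $E(k)$ may be finite --- is not handled. Two concrete failures in Case~B. First, your Step~3 specializes at points $b\in\Gamma(k)$, so you need $\Gamma(k)$ to be infinite; but your $\Gamma$ is ``the image of a $k$-rational curve through $P_0$'' inside the trace-zero subvariety $B\subset\operatorname{Res}_{K_0/k}E$, and no such curve with infinitely many $k$-points is produced: a generic curve through $P_0$ has genus $\geq 2$, hence finitely many $k$-points by Faltings; the Zariski closure of $\Z P_0$ is not a curve when $P_0$ and $P_0^{\sigma}$ are independent; and $B$ need not contain any elliptic curve defined over $k$ at all. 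Without infinitely many $b$ to specialize at, Silverman's theorem yields nothing. Second, the non-torsion $k(\Gamma)$-point on the generic fiber is never actually exhibited: $\gamma_1$ of Proposition~\ref{prop:jacobian} is defined only over $k(\sqrt{-3})(t)$, and ``combining'' it with the diagonal section does not descend it to $k(\Gamma)$; in Case~B your verification (``a non-zero class derived from the infinite-order class of $P_0$'') is an assertion, not an argument. Nothing in your proposal plays the role of the step on which the paper's proof actually turns, namely the $\End_{k}(E)$-linear independence of $P$ and $P^{\sigma}$ (with the exceptional case of CM by $\Q(\sqrt{-3})$ and $\sqrt{-3}\in k$ treated separately, as in Theorem~\ref{thm:with_sqrt(-3)}), which is what produces a Zariski-dense set of $k$-points on $S_E$.

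There is also a flaw in your Step~3 even granting the earlier steps: the principle ``finitely generated $\Rightarrow$ bounded N\'eron--Tate height'' is false, since $E(k)$ (or the trace-zero subgroup of $E(K)$ for a fixed cubic $K$) has points of unbounded height whenever it is infinite, so your height argument rules out neither the Case~1 triples nor the possibility that all $K_{\lambda}$ coincide. The finiteness you need is genuinely deeper: for a good fiber at $t_0\neq 0$, the collinear triples with slope $t_0$ defined over a fixed number field $K$ are parametrized by the $K$-points of a nonsingular plane quartic of genus~$3$, and one must invoke Faltings' theorem --- this is exactly Lemma~\ref{lem:rank_increase}. By contrast, the paper dispenses with your multisection/base-change construction: it proves $S_E(k)$ is Zariski dense using the multiples $[nP,nP^{\sigma},nP^{\sigma^{2}}]$ (or $[mP,nP,-(m+n)P]$ when $P\in E(k)$), then applies Lemma~\ref{lem:zariski_dense} (Jacobian fibration plus Merel's uniform torsion bound) to find infinitely many fibers with infinitely many $k$-points, and concludes with Lemma~\ref{lem:rank_increase} and Merel again. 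If you want to keep your specialization strategy, it can plausibly be repaired when $E(k)$ is infinite, but for cubic $K_0$ you would still need a substitute for the independence argument and for Faltings.
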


Let $K/k$ be any finite extension and let $\Tr_{K/k}: E(K) \to E(k)$
denote the trace map.  The kernel $\Ker \Tr_{K/k}\subset E(K)$  is
the subgroup of points of $E(K)$ of trace zero. 
The point satisfying the second condition of
Lemma~\ref{prop:rat_pts_on_SE} belongs to $\Ker \Tr_{K/k}$.

\begin{lem} 
The following are equivalent:
\begin{enumerate}
\item $\rk E(K)>\rk E(k)$,
\item $E(K)$ contains a trace zero point of infinite order,
\item $\#(\Ker \Tr_{K/k})=\infty$.
\end{enumerate}
\end{lem}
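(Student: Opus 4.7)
The plan is to reduce all three statements to the rank of $\Ker \Tr_{K/k}$ via the standard averaging identity, together with a direct-sum decomposition of $E(K)\otimes_{\Z}\Q$ coming from the trace map.

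First I would record the key identity: for $P\in E(k)$ every conjugate of $P$ is $P$ itself, so the composition
\[
E(k)\hookrightarrow E(K)\xrightarrow{\Tr_{K/k}} E(k)
\]
is multiplication by $n:=[K:k]$. Tensoring with $\Q$ (which preserves exactness and so also preserves injectivity), this composition becomes an automorphism of $E(k)\otimes\Q$, hence the inclusion splits and gives
\[
E(K)\otimes\Q \;=\; \bigl(E(k)\otimes\Q\bigr)\;\oplus\;\bigl(\Ker \Tr_{K/k}\otimes\Q\bigr).
\]
Taking $\Q$-dimensions yields the rank identity $\rk E(K) - \rk E(k) = \rk \Ker \Tr_{K/k}$.

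Finally, I would invoke the Mordell--Weil theorem: $E(K)$ is finitely generated, and therefore so is its subgroup $\Ker \Tr_{K/k}$. For a finitely generated abelian group $A$, the three conditions $\rk A>0$, \emph{$A$ contains an element of infinite order}, and \emph{$A$ is infinite} are mutually equivalent. Applying this with $A=\Ker \Tr_{K/k}$ and combining with the rank identity gives (1)$\Leftrightarrow$(2)$\Leftrightarrow$(3). There is no real obstacle here beyond bookkeeping; the entire content is the averaging identity $\Tr_{K/k}|_{E(k)}=[n]$ together with the finite generation of Mordell--Weil groups over number fields.
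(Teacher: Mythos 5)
Your proposal is correct and follows essentially the same route as the paper: both reduce the lemma to the rank identity $\rk E(K)=\rk E(k)+\rk \Ker \Tr_{K/k}$, using the fact that $\Tr_{K/k}$ restricted to $E(k)$ is multiplication by $n=[K:k]$. The paper packages this integrally via the explicit maps $t:P\mapsto(\Tr_{K/k}(P),\,nP-\Tr_{K/k}(P))$ and $t':(Q,R)\mapsto Q+R$ with $t'\circ t=t\circ t'=[n]$, whereas you tensor with $\Q$ and split; this is the same idea, and your explicit appeal to Mordell--Weil finite generation just makes precise what the paper leaves as ``follows immediately.''
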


\begin{proof}
Let $n=[K:k]$.  Define the maps $t$ and $t'$ of abelian groups as follows.
\[
\begin{array}{ccccc}
E(K) & \overset{t}{\longrightarrow}  & E(k)\times \Ker \Tr_{K/k}
& \overset{t'}{\longrightarrow}  & E(K)
\\[\medskipamount]
P & \longmapsto &
\bigl(\Tr_{K/k}(P), nP -\Tr_{K/k}(P)\bigr) & 
\\
& & (Q, R)& \longmapsto & Q+R
\end{array}
\]
Then we see that $t'\circ t=[n]$ and $t\circ t'=[n]$, where $[n]$ is the multiplication-by-$n$ map. 
Thus, we have
\[ 
 \rk E(K)=\rk E(k)+\rk \Ker \Tr_{K/k}.
\]
The statement of Lemma follows immediately.
\end{proof}

We now prove some lemmas that are necessary later in the proof.

Consider the surface defined by \eqref{quartic} together with the
fibration $(t,u,\delta)\mapsto t$.  Suppose we have infinitely many
$k$-rational points $\gamma_{n}=(u_{n},t_{0},\delta_{n})$ for a fixed
$t_{0}$. 
For each $n$, the point $\gamma_{n}$
corresponds to a class $[P_{n},Q_{n},R_{n}]$ in $S_{E}$.  Let $K_{n}$ be the field
over which $P_{n}$, $Q_{n}$ and $R_{n}$ are defined.  We already know that 
$K_{n}=k$ or $K_{n}/k$ is a cyclic cubic extension of $k$.  

\begin{lem}\label{lem:rank_increase}
Suppose there is a nonzero $t_{0}$ such that the fiber
$C_{t_{0}}=\pi^{-1}(t_{0})$ is a good fiber having infinitely many 
$k$-rational points $\gamma_{n}=(u_{n},t_{0},\delta_{n})$.  Then the 
compositum of all $K_{n}$ is an infinite extension of~$k$.
\end{lem}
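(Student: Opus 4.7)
The plan is to argue by contradiction, using the Mordell--Weil theorem together with Faltings' theorem on rational points of curves of genus at least two. Suppose the compositum $L$ of all the $K_{n}$ were a finite extension of $k$. Then $L$ is a number field, so Mordell--Weil gives that $E(L)$ is finitely generated, and by Lemma~\ref{prop:rat_pts_on_SE} every triple $(P_{n},Q_{n},R_{n})$ lies in $E(L)^{3}$. Via the identification $\widehat D\isom E\times E$ sending $(P,Q,R)$ to $(P,Q)$, this yields an infinite sequence of distinct pairs $(P_{n},Q_{n})\in E(L)\times E(L)$.

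Next I would identify the curve in $E\times E$ on which all these pairs lie. Let $\Psi\colon E\to \P^{1}$ be the degree-three morphism $P\mapsto y(P)-t_{0}x(P)$, so that $\Psi^{-1}(u)$ is the divisor cut out on $E$ by the line $y=t_{0}x+u$. The pairs lie on the fiber product $\widetilde C_{t_{0}}:=E\times_{\P^{1},\Psi}E$, which splits as the diagonal $\Delta_{E}$ plus a non-diagonal component $\widetilde C_{t_{0}}'$. The first projection exhibits $\widetilde C_{t_{0}}'$ as a double cover of $E$ whose discriminant, computed from the quadratic giving the two residual intersections of a slope-$t_{0}$ line through $(x,y)\in E$ with $E$, is the function
\[
F(x,y)\;=\;t_{0}^{4}-6t_{0}^{2}x-3x^{2}+8t_{0}y-4a_{4}
\]
on $E$. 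Since $t_{0}\neq 0$, the $y$-linear term is nonzero, so $F$ is not a square in $k(E)$ and $\widetilde C_{t_{0}}'$ is irreducible. For a good fiber the four branch points, namely the roots on $E$ of $(3x^{2}+a_{4})^{2}-4t_{0}^{2}(x^{3}+a_{4}x+a_{6})=0$, are distinct, and Riemann--Hurwitz gives $g(\widetilde C_{t_{0}}')=3$.

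Finally, only the finitely many points of $E$ with tangent slope $t_{0}$ contribute pairs on the diagonal, so infinitely many pairs $(P_{n},Q_{n})$ must lie on $\widetilde C_{t_{0}}'(L)$. This produces infinitely many $L$-rational points on a smooth irreducible curve of genus three, in direct violation of Faltings' theorem. The contradiction shows that the compositum of the $K_{n}$ cannot be finite over $k$.

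The main obstacle I anticipate is verifying the genus computation uniformly: I must check that the good-fiber hypothesis of Proposition~\ref{prop:jacobian} truly excludes every $t_{0}$ at which the four branch points coalesce, and I must rule out that the potential extra endomorphisms in the complex multiplication case (which enlarge the Néron--Severi group of $E\times E$) produce additional components of $\widetilde C_{t_{0}}'$. The non-squareness of $F$ above already delivers irreducibility uniformly, leaving only the branch-point coalescence issue, which the bad-fiber polynomial~\eqref{bad-locus} should control.
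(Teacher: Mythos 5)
Your proposal follows the same core strategy as the paper: transfer the infinitely many $k$-rational points of the fixed good fiber $C_{t_0}$ onto a single auxiliary curve of genus $3$ defined over one number field, and let Faltings' theorem force the compositum of the $K_n$ to be infinite. The difference is only in the model of that curve: the paper eliminates $\xi_3$ and $u$ from the symmetric-function relations and works with a plane quartic in $(\xi_1,\xi_2)$, asserting by ``a calculation'' that it is nonsingular of genus $3$ exactly when $t_0$ avoids the locus \eqref{bad-locus}; you realize (a birational model of) the same curve as the non-diagonal component of $E\times_{\P^1}E$ for $P\mapsto y-t_0x$, viewed as a double cover of $E$, and compute the genus by Riemann--Hurwitz. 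Your discriminant $F=t_0^4-6t_0^2x-3x^2+8t_0y-4a_4$ is correct, and so is the irreducibility argument: a square root of $F$ would have only a pole of order $2$ at infinity, hence lie in the span of $1$ and $x$, whose squares have no $y$-term --- note this should be run over $\bar k(E)$ to get geometric irreducibility, and the same argument does it. The Mordell--Weil step and the worry about complex multiplication are superfluous, as you yourself observe.

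Two points need repair to make the argument complete. First, your identification of the branch locus is wrong: the points of $E$ whose tangent has slope $t_0$, i.e.\ the roots of $(3x^2+a_4)^2-4t_0^2(x^3+a_4x+a_6)$, are \emph{not} zeros of $F$; at such a point $Q$ one finds $F(Q)=\bigl(t_0^2-3x(Q)\bigr)^2$, generically nonzero. The actual zeros of $F$ are the residual (third) intersection points $-2Q$ of the four slope-$t_0$ tangent lines. Second, the coalescence issue you leave open does close, and in the way you hope: the slope-$t_0$ tangent lines correspond bijectively to the roots $u$ of $\Delta(u,t_0)$, so by Proposition~\ref{prop:jacobian} the fiber $\delta^2=\Delta(u,t_0)$ is good precisely when there are four distinct such lines; since distinct parallel lines are disjoint, their residual points are then four distinct points, and as $F$ has a pole of order exactly $4$ at infinity its zero divisor has degree $4$, so these zeros are simple, Riemann--Hurwitz gives $g=3$, and Faltings applies. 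With these corrections your proof is sound and is essentially the paper's argument in a different (and arguably more conceptual) presentation of the genus-$3$ curve.
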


\begin{proof}
For each $n$, the cubic polynomial $x^{3} + A x + B - (t_{0} x + u_{n})^{2}$
in $x$ factors into three linear terms over $K_{n}$.
Conversely, finding a $k$-rational point $(u,t_{0},\delta)$ on the 
surface  \eqref{quartic}  involves finding $u$ in $k$ such that 
$x^{3} + A x + B - (t_{0} x + u)^{2}$ factors completely over some cubic 
cyclic field~$L$.  This is equivalent to finding a point 
$(\xi_{1},\xi_{2},\xi_{3},t_{0})$ on the curve given by
\[
\left\{
\begin{aligned}\null
&\xi_{1}+\xi_{2}+\xi_{3} = t_{0}^{2}, \\
&\xi_{1}\xi_{2}+\xi_{2}\xi_{3} + \xi_{3}\xi_{1}= A - 2t_{0}u, \\
&\xi_{1}\xi_{2}\xi_{3} = u^{2} - B.
\end{aligned}
\right.
\]
Since $t_{0}\neq0$, we  obtain a plane curve of degree~$4$ by eliminating $\xi_{3}$ and $u$.  A calculation shows that this degree~$4$ curve is 
nonsingular if and only if  $t_{0}^{8} + 18 A t_{0}^{4} + 108 B t_{0}^{2} - 27 A^{2} \neq 0$.  If that is the 
case, the genus of the curve is~$3$.  Thus, by a theorem of 
Faltings, it has only finitely many $K$-rational points for each fixed 
number field~$K$.  Therefore, the compositum of all $K_{n}$ cannot be
a number field of finite degree over $\Q.$
\end{proof}

In the case where $k$ contains $\sqrt{-3}$, Lemma~\ref{lem:rank_increase} and
Proposition~\ref{prop:jacobian} proves the following statement, which is 
stronger than Theorem~\ref{Theorem1a}.

\begin{thm}\label{thm:with_sqrt(-3)}
Let $E$ be an elliptic curve defined over a number field $k$ 
containing $\sqrt{-3}$.  Then there exist infinitely many cyclic cubic 
extensions $K_{\lambda}$ such that the rank of the Mordell-Weil group 
$\rk E(K_{\lambda}) > \rk E(k)$.\qed
\end{thm}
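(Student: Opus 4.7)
The plan is to specialize the generic infinite-order point from Proposition~\ref{prop:jacobian}, apply Lemma~\ref{lem:rank_increase} to obtain infinitely many distinct cyclic cubic extensions, and then argue that the associated trace-zero points are non-torsion.

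Because $\sqrt{-3}\in k$, the explicit formula in Proposition~\ref{prop:jacobian} shows that $\gamma_{1}$ is already defined over $k(t)$, and the two points at infinity of $C_{t}$, which a~priori are $k(\sqrt{-3})(t)$-rational, are $k(t)$-rational as well. Hence $C_{t}\isom J_{t}$ over $k(t)$, and I may regard $\gamma_{1}$ as a $k(t)$-point of $C_{t}$ of infinite order. By Silverman's specialization theorem the specialization map $C_{t}(k(t))\to C_{t_{0}}(k)$ is injective for all but finitely many $t_{0}\in k$; I would choose such a $t_{0}\in k^{\times}$ that also avoids the bad locus~\eqref{bad-locus}. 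Then $\gamma_{1}(t_{0})\in C_{t_{0}}(k)$ has infinite order, and the multiples $\gamma_{n}=n\gamma_{1}(t_{0})$ give infinitely many $k$-rational points on the good fiber $C_{t_{0}}$. Lemma~\ref{lem:rank_increase} now produces infinitely many distinct cyclic cubic extensions $K_{n}/k$, each equipped with a trace-zero point $P_{n}\in E(K_{n})$ arising from the triple $[P_{n},Q_{n},R_{n}]$ attached to $\gamma_{n}$.

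The main obstacle is the final step: guaranteeing $\rk E(K_{n})>\rk E(k)$, which by the three-way equivalence immediately preceding Lemma~\ref{lem:rank_increase} requires $P_{n}$ to have \emph{infinite} order for infinitely many $n$. I would handle this via a height argument. The $x$-coordinates of $P_{n},Q_{n},R_{n}$ are the three roots of the cubic $x^{3}-t_{0}^{2}x^{2}+(A-2t_{0}u_{n})x+(B-u_{n}^{2})$, where $u_{n}$ is the $u$-coordinate of $\gamma_{n}$. Since the canonical height of $\gamma_{n}$ on $C_{t_{0}}$ grows like $n^{2}$, the naive height of $u_{n}$ is unbounded in $n$, so that the Weil height on $E$ of the Galois-conjugate points $P_{n},Q_{n},R_{n}$ tends to infinity. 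As torsion points form a height-bounded set, $P_{n}$ must be non-torsion for all sufficiently large $n$, completing the proof. An alternative, bypassing heights, is to invoke Merel's uniform boundedness theorem in degree $3[k:\Q]$: only finitely many cyclic cubic extensions of $k$ can be generated by a torsion point of $E$, so among the infinitely many distinct $K_{n}$ all but finitely many must support a non-torsion trace-zero $P_{n}$.
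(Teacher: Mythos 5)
Your proposal is correct and takes essentially the same route as the paper: the authors deduce the theorem directly from Proposition~\ref{prop:jacobian} and Lemma~\ref{lem:rank_increase}, leaving implicit exactly the details you supply, namely specializing the $k(t)$-point $\gamma_{1}$ via Silverman's theorem (as the paper does in \S 7) and disposing of torsion via Merel (as in the paper's proof of Theorem~\ref{Theorem1a}). Both your height argument and your Merel alternative are valid ways to carry out that last step.
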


For the general case we need another lemma.

\begin{lem}\label{lem:zariski_dense}
Let $S$ be a smooth surface and $C$ a smooth curve both defined
over $k$.  Let $\pi:S\to C$ be a fibration defined over $k$ such
that the generic fiber is a curve of genus~$1$ equipped with an
involution $\iota$ with a fixed point. Suppose that the set of
$k$-rational points, $S(k)$, is Zariski dense in $S$, then there
exist infinitely many $k$-rational points $P$ on $C$ such that
the fiber $\pi^{-1}(P)$ contains infinitely many $k$-rational
points.
\end{lem}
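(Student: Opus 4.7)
The plan is to use the fixed point of $\iota$ to realize the generic fiber of $\pi$ as an elliptic curve over $k(C)$, and then to combine Merel's uniform torsion bound with the Zariski density of $S(k)$ to force infinitely many fibers with infinite sets of $k$-rational points.

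First, taking the fixed point of $\iota$ as origin, I would regard the generic fiber $F_{\eta}$ of $\pi$ as an elliptic curve $\mathcal{E}$ over $K=k(C)$, with $\iota$ acting as $[-1]$. This fixed point yields a rational section of $\pi$ which is a morphism $\sigma\colon C_{0}\to S$ on some nonempty open $C_{0}\subseteq C$ whose complement in $C$ is finite. For each $t\in C_{0}(k)$ for which $\pi^{-1}(t)$ is smooth, the point $\sigma(t)\in \pi^{-1}(t)(k)$ endows $F_{t}:=\pi^{-1}(t)$ with the structure of an elliptic curve defined over $k$.

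Next, I would argue by contradiction: suppose that only finitely many $t\in C(k)$ satisfy $\#F_{t}(k)=\infty$. Then for all but finitely many $t\in C(k)$ the set $F_{t}(k)$ is finite, and whenever in addition $t\in C_{0}(k)$ and $F_{t}$ is smooth this forces $F_{t}(k)$ to consist entirely of torsion points of the elliptic curve $F_{t}$. By Merel's uniform boundedness theorem, there exists an integer $N=N(k)$, depending only on $k$, such that the torsion subgroup of every elliptic curve over $k$ has order at most $N$; in particular $F_{t}(k)\subseteq F_{t}[N!]$ for every such $t$.

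Finally, let $T\subseteq S$ be the Zariski closure of the $N!$-torsion subscheme of the relative group scheme over $C_{0}$. Its restriction to the generic fiber is the finite group scheme $\mathcal{E}[N!]$, so $\dim T=1$ and $T$ is a proper closed subset of $S$. Consequently $S(k)$ is contained in the union of $T$ with finitely many fibers of $\pi$ (the bad fibers, the fibers over $C\setminus C_{0}$, and the finitely many fibers over $t\in C(k)$ with $F_{t}(k)$ infinite), which is a proper Zariski closed subset of $S$. This contradicts the assumed Zariski density of $S(k)$ in $S$, and so completes the proof. The central obstacle is the uniformity of the torsion bound: without Merel's theorem one only knows that each $S[n]$ has dimension $1$, and a countable union $\bigcup_{n\ge 1}S[n]$ of curves need not be Zariski closed, so would not contradict Zariski density; the uniform bound fits every finite $F_{t}(k)$ into the single $1$-dimensional subvariety $S[N!]$, allowing the argument to close.
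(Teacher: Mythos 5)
There is a genuine gap at the very first step: you assume that the fixed point of $\iota$ is a $k(C)$-rational point of the generic fiber, so that it ``yields a rational section of $\pi$'' and turns the generic fiber into an elliptic curve over $k(C)$ with $\iota=[-1]$. The hypothesis only provides a (geometric) fixed point, i.e.\ it rules out $\iota$ being a translation; it does not give a rational point on the generic fiber. This is not a pedantic distinction, because in the situation the lemma is built for --- the fibration $(t,u,\delta)\mapsto t$ on $\overline S_E$ with the involution $(t,u,\delta)\mapsto(t,u,-\delta)$ --- the fixed locus of $\iota$ on the generic fiber consists of the four ramification points $\delta=0$, $\Delta(u,t)=0$, which are conjugate and not $k(t)$-rational; indeed \S 4 emphasizes that when $\sqrt{-3}\notin k$ one does not even know whether $C_t$ has any $k(t)$-rational point, which is precisely why the Jacobian $J_t$ is introduced. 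With your reading, the lemma could not be applied where it is needed, and your subsequent steps (``$F_t(k)$ finite implies all its points are torsion'', the $N!$-torsion multisection $T$) all presuppose the fiberwise group structure over $k$ that the section was supposed to supply.

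The paper's proof avoids this by passing to the Jacobian fibration $\pi':J\to C$ via the morphism $f:S\to J$ sending $P$ to the class of $(P)-(\iota(P))$; this is defined over $k$ with no rational point needed, and the fixed-point hypothesis is exactly what makes $f$ non-constant (dominant of degree $4$, since $\iota$ is inversion about a geometric fixed point, so $f$ is essentially multiplication by $2$). Zariski density of $S(k)$ then gives Zariski density of $f(S(k))$ in $J$, Merel's theorem confines the fiberwise $k$-rational torsion of $J$ to a proper closed subset (this is the part of your argument that does survive, transplanted to $J$), so infinitely many fibers $J_P$ contain a non-torsion point of the form $f(Q)$ with $Q\in\pi^{-1}(P)(k)$; such a fiber $\pi^{-1}(P)$ has a $k$-rational point and Jacobian of positive rank, hence infinitely many $k$-rational points. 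To repair your write-up you would need to insert this Jacobian step (or otherwise produce a $k$-structure of abelian variety on the fibers without assuming a section), since the uniform-torsion/dimension-count part alone does not close the argument on $S$ itself.
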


\begin{proof}
Let $\pi':J\to C$ be the Jacobian fibration associated with
$\pi:S\to C$.  There is a map $f:S\to J$ of degree~$4$ defined
over~$k$ sending a point $P\in S$ to the divisor class $(P) -
(\iota(P))$.  Since $f$ is dominant, the image of $S(k)$ by $f$
is Zariski dense.
	
By Merel's theorem on the bound for the torsion points defined
over a number field on an elliptic curve (\cite{merel}), the set
consisting of all the $k$-rational torsion points of all the
fibers is contained in a proper Zariski closed set.  Thus if we
denote by $f(S(k))'$ the set consisting of all the points in the
image of $f(S(k))$ that have infinite order, then $f(S(k))'$ is
still Zariski dense in $J$.  This means that there are
infinitely many $k$-rational points $P$ on $C$ such that the
fiber $\pi'{}^{-1}(P)$ contains points in $f(S(k))'$.  For such
$P$ the $\pi^{-1}(P)$ contains infinitely many $k$-rational
points.
\end{proof}

\begin{proof}[Proof of Theorem~\ref{Theorem1a}]
Let $K_{0}/k$ be a cyclic extension of degree dividing $3$ such
that $E(K_{0})$ is infinite. Let $P\in E(K_{0})$ be a point of
infinite order. First, we show that the set of $k$-rational
points in $S_{E}$ is Zariski dense in $S_{E}$.

If $K_{0}=k$ and $P$ is defined already over~$k$, then 
the set $\{[mP,nP,-(m+n)P]\mid n,m\in\Z\}$  is clearly a Zariski
dense in $S_{E}$. We thus assume that $K_{0}/k$ is a
cubic extension and $P$ is defined over~$K_{0}$, but not
over~$k$. Let $\sigma$ be a generator of $\Gal(K_{0}/k)$. Then
$R=\Tr_{K_{0}/k}(P)$ is a point defined over $k$. If $R$ is a
point of infinite order, then we are in the previous case. If
not, replacing $P$ by $nP$ if necessary, we may assume that
$\Tr_{K_{0}/k}(P)=O$.

We consider $E(K_{0})$ as an $\End_{k}(E)$-module, and we claim
that $P$ and $P^{\sigma}$ are $\End_{k}(E)$-linearly
independent, except when $E$ has complex multiplication over
$\Q(\sqrt{-3})$ and $k$ contains $\sqrt{-3}$. Suppose $[\alpha]$
and $[\beta]$ are two nonzero endomorphsims of $E$ defined over~$k$,
and suppose we have the relation
\begin{equation}\label{depend}
[\alpha] P + [\beta] P^{\sigma} = O.
\end{equation}
Apply $\sigma$ to both sides of~\eqref{depend}.  Since $\sigma$
commutes with $[\alpha]$ and $[\beta]$, we have another relation
\begin{equation}\label{depend2}
[\alpha] P^{\sigma} + [\beta] (-P -P^{\sigma}) = O.
\end{equation}
Eliminating $P^{\sigma}$ from \eqref{depend} and \eqref{depend2}, we
obtain
\[
\bigl([\alpha]^{2}-[\alpha][\beta]+[\beta]^{2}\bigr)P=O.
\]
This occurs only when $E$ has complex multiplication by 
$\Q(\sqrt{-3})$. Moreover, since $[\alpha]$ is defined over $k$, 
$\sqrt{-3}$ must be contained in $k$.  We thus verified the claim.
The case where $k$ contains $\sqrt{-3}$ has been treated already.  In 
what follows we assume $\sqrt{-3}\not\in k$.

Next we claim that the subgroup $\{(nP,nP^{\sigma},nP^{\sigma^{2}})\mid n\in\Z\}$  is Zariski dense in $\widehat D$.  
To prove this, it suffices to show that
$\{(nP,nP^{\sigma})\mid n\in\Z\}$ is
Zariski dense in $E\times E$. Let $F$ be the Zariski closure of this
subgroup. Suppose $F$ does not equal $E\times E$, then $F$ is a closed
subgroup of dimension~$1$ in $E\times E$. Let $F^{0}$ be the connected
component of $F$ containing the identity. We then have two isogenies
$\phi_{1}$ and $\phi_{2}$ from $F^{0}$ to $E$, corresponding to two
projections $E\times E\to E$. Choose $m\in\Z$ such that $(m
P,mP^{\sigma})$ is in $F^{0}$. Let $\hat \phi_{1}$ be the dual isogeny
of $\phi_{1}$. Consider the endomorphism $\phi_{2}\hat\phi_{1}$ of
$E$. Let $d$ be the degree of $\phi_{1}$.  Since $\hat\phi_{1}\phi_{1}$ 
equals the multiplication-by-$d$ map, we have
\begin{alignat*}2
\phi_{2}\hat\phi_{1}(mP) 
&= \phi_{2}\hat\phi_{1}\phi_{1}\bigl((mP,mP^{\sigma})\bigr) &&\\
&=\phi_{2}\bigl((dmP,dmP^{\sigma})\bigr) &&\qquad(d = \deg\phi_{1})\\
&=dmP^{\sigma}. &&
\end{alignat*}
This contradicts the independence of $P$ and $P^{\sigma}$.

Since the projection map $\widehat D\to \overline{S}_{E}$ is a
dominant map, the set $\{[nP,nP^{\sigma},nP^{\sigma^{2}}]\mid n\in\Z\}$ is also
Zariski dense in $S_{E}$.  We have thus proved that $S_{E}(k)$ is
Zariski dense in all cases.

The fibration $\pi:\overline{S}_{E}\to \P^{1}$ constructed in
\S4 satisfies the hypotheses of Lemma~\ref{lem:zariski_dense}.
 Thus, there exist infinitely many $t\in\P^{1}$ such that the
fiber $\pi^{-1}(t)$ has infinitely many $k$-rational points.  In
particular, we have at least one such $t$ such that $t\neq0$ and
$\pi^{-1}(t)$ is a good fiber. Then
Lemma~\ref{lem:rank_increase} implies that there exist
infinitely many different cyclic cubic extension $K_{\lambda}$
such that the elliptic curve $E$ possesses a point $P_{\lambda}$
defined over $K_{\lambda}$.

In order to complete the proof we have to show that $P_{\lambda}$ has
infinite order except for finite number of $\lambda$.  But this is
true because the bound of the order of torsion points given by
Merel's theorem depends only on the degree of the field.
\end{proof}

\section{Elliptic curves with at least $6$ rational torsion points}

In this section we prove the following statement.

\begin{thm}\label{thm-torsion}
Let $E$ be an elliptic curve defined over $\Q$. Suppose that
there are at least $6$ points in $E(\Q)$.  Then,  for infinitely
many cyclic cubic extensions $K/\Q$,  we have $\rk E(K )> \rk
E(\Q)$.
\end{thm}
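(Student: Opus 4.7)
The plan is to follow the same pipeline used for Theorem~\ref{Theorem1a}: produce enough $\Q$-rational points on the generalized Kummer $K3$ surface $S_E$ of \S4 to force $S_E(\Q)$ to be Zariski dense, apply Lemma~\ref{lem:zariski_dense} to the elliptic fibration $\pi:S_E\to \P^1$ to obtain infinitely many fibers possessing infinitely many $\Q$-rational points, and then invoke Lemma~\ref{lem:rank_increase} to extract infinitely many cyclic cubic extensions $K_\lambda/\Q$ with $\rk E(K_\lambda)>\rk E(\Q)$. The essential new content, compared with Theorem~\ref{Theorem1a}, is the construction of those rational points when we cannot assume a priori a point of infinite order on $E$ over any cubic extension.

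First I would dispose of the easy case. If $E(\Q)$ is infinite, Theorem~\ref{Theorem1a} applied with $K_0=\Q$ gives the conclusion directly. So we may assume $E(\Q)=E(\Q)_{\text{tors}}$ with $|E(\Q)_{\text{tors}}|\geq 6$. By Mazur's classification, $E(\Q)_{\text{tors}}$ lies on a short list of abelian groups of order at least~$6$. In every case, one obtains a large supply of ordered triples $(T_1,T_2,T_3)$ of rational torsion points with $T_1+T_2+T_3=O$, each producing a $\Q$-rational point $[T_1,T_2,T_3]\in S_E(\Q)$ by Lemma~\ref{prop:rat_pts_on_SE}. Moreover, for each $T\in E(\Q)_{\text{tors}}$ the map $P\mapsto[P,-P+T,-T]$ embeds $E$ into $S_E$ as a $\Q$-rational curve $\nu_T(E)$, with $\nu_O(E)$ being the fiber of $\pi$ over $t=\infty$.

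The main obstacle is to upgrade this finite/one-dimensional rational structure to Zariski density of $S_E(\Q)$: each $\nu_T(E)$ has only the finitely many $\Q$-rational points coming from $E(\Q)$, so these curves together with the finite set of rational triples cannot by themselves yield a dense set. The strategy is to exploit the high Picard number of $S_E$ that is forced by the rational torsion. When $E$ has a rational point of order~$3$ (or a rational subgroup of order~$3$), some of the nine $A_2$-singularities of $\overline{S}_E$, which lie over the inflection points $E[3]$, are $\Q$-rational, and the components of their resolutions are $\Q$-rational $(-2)$-curves on $S_E$; together with the rational curves $\nu_T(E)$ and the fiber components of $\pi$ these generate a sublattice of $\operatorname{NS}(S_E)$ of large rank defined over $\Q$. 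From this sublattice I would extract a second elliptic fibration $\pi':S_E\to \P^1$ defined over $\Q$, distinct from $\pi$, and show that the rational curves above supply a $\Q(t)$-rational section of infinite order on the Jacobian of $\pi'$. The existence of such a section on one auxiliary fibration over $\Q$ is enough to make $S_E(\Q)$ Zariski dense. For the torsion subgroups containing no rational $3$-torsion, such as $\Z/8\Z$ or $\Z/2\Z\oplus\Z/4\Z$, a parallel analysis using only the rational $2$-torsion together with the two involutions $[P,Q,R]\mapsto[Q,P,R]$ and $[P,Q,R]\mapsto[-P,-Q,-R]$ described after Proposition~4.5 will be required; this case analysis is the hardest part of the argument.

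Once $S_E(\Q)$ is known to be Zariski dense, the conclusion follows exactly as in the proof of Theorem~\ref{Theorem1a}. The fibration $\pi:S_E\to\P^1$ and the involution $[P,Q,R]\mapsto[-P,-Q,-R]$ satisfy the hypotheses of Lemma~\ref{lem:zariski_dense}, yielding infinitely many $t_0\in\P^1(\Q)$ such that $\pi^{-1}(t_0)$ is a good fiber with infinitely many $\Q$-rational points. For any such $t_0$, Lemma~\ref{lem:rank_increase} then produces infinitely many distinct cyclic cubic extensions $K_\lambda/\Q$ over which $E$ acquires a new point, and Merel's theorem ensures that all but finitely many of these points are of infinite order, so that $\rk E(K_\lambda)>\rk E(\Q)$ for infinitely many $\lambda$, as required.
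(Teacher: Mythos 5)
Your outer framework (reduce to the finite case via Theorem~\ref{Theorem1a}, then Lemma~\ref{lem:zariski_dense} and Lemma~\ref{lem:rank_increase} plus Merel) is fine, but the actual content of the theorem is left unproved. You correctly observe that the torsion triples $[T_1,T_2,T_3]$ and the curves $\nu_T(E)$ cannot make $S_E(\Q)$ Zariski dense by themselves, and your substitute is the assertion that one can ``extract a second elliptic fibration $\pi'$ over $\Q$'' from the $(-2)$-curves coming from rational $A_2$-singularities and the fibral/torsion classes, together with ``a $\Q(t)$-rational section of infinite order.'' That is precisely the hard point, and nothing in the sketch establishes it: the classes you list are rigid rational curves, and producing from them a fibration whose Mordell--Weil group has positive rank \emph{over $\Q$} requires an explicit N\'eron--Severi/Mordell--Weil computation you have not done. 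Indeed, for the paper's own fibration the generic Mordell--Weil group is $\Z\oplus\Z/3\Z\oplus\Z/3\Z$ with the infinite-order generator $\gamma_1$ defined only over $k(\sqrt{-3})$ (Proposition~\ref{prop:jacobian} and the remark following it), which is exactly why the case $\sqrt{-3}\notin\Q$ is delicate; there is no a priori reason a second fibration behaves better over $\Q$. Moreover your construction only addresses torsion subgroups with rational $3$-torsion; for $\Z/7\Z$, $\Z/8\Z$, $\Z/10\Z$, $\Z/2\Z\oplus\Z/4\Z$, $\Z/2\Z\oplus\Z/8\Z$ you explicitly defer the argument (``a parallel analysis \dots\ the hardest part''), so the proof is missing exactly where the hypothesis ``at least $6$ rational points'' has to be used. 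A smaller slip: the fiberwise involution needed for Lemma~\ref{lem:zariski_dense} is $[P,Q,R]\mapsto[P,R,Q]$, i.e.\ $(t,u,\delta)\mapsto(t,u,-\delta)$; the involution $[P,Q,R]\mapsto[-P,-Q,-R]$ you invoke sends $t\mapsto-t$ and does not preserve the fibers of $\pi$.

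For contrast, the paper does not attempt Zariski density of $S_E(\Q)$ in the finite-torsion case at all. By Lemma~\ref{lem:rank_increase} it suffices to exhibit a \emph{single} good fiber $C_{t_0}$ with infinitely many $\Q$-rational points, and this is done explicitly on the universal families: a line through three rational torsion points summing to $O$ (for a point $P$ of order $6$, the line $y=\lambda x$ through $P$, $2P$, $3P$, giving the fiber $t=\lambda$; for $\Z/4\Z\oplus\Z/2\Z$, the line $y=x+\lambda^2$ through $P$, $Q$, $-P-Q$, giving the fiber $t=1$) produces a rational point on that fiber, which is then converted to a Weierstrass model and checked, by a height/denominator computation valid for all relevant parameter values, to have infinite order. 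That explicit point of infinite order on one fiber is the essential new ingredient, and it is exactly what your proposal never produces.
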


\begin{proof}
If $E(\Q)$ has infinitely many points, then this is nothing but
Theorem~\ref{Theorem1a}. Suppose $E(\Q)$ is finite.  Then, by
Mazur's bound for the torsion of elliptic curve over $\Q$,
either $E(\Q)$ is a cyclic group of order~$6,7,8,9,10, 12$, or
it contains  $\Z/4\Z\times \Z/2\Z$ as a subgroup.  In view of
Lemma~\ref{lem:rank_increase}, it suffices to show that one of
the fibers  $C_{t_{0}}$ of the fibration defined by
\eqref{quartic} has infinitely many rational points.  In the
sequel, we work out in detail to give a particular fiber that
has infinitely many rational points for the case where $E(\Q)$
has a point of order $6$, or $E(\Q)\supset \Z/4\Z\times \Z/2\Z$.  In
the case of higher torsion points, we can prove it similarly to the 
$6$-torsion case. The actual calculations, however, become 
more complicated, and so we omit it here.
\end{proof}

\subsection{Elliptic curve with $6$-torsion point} 
Let us consider the universal elliptic curve having a point of 
order~$6$.  It is given by the equation
\[
y^2 + (1-\lambda) x y - \lambda(\lambda+1) y = x^3 - \lambda(\lambda+1) x^2. 
\]
When $\lambda\neq0$, $-1$ or $-1/9$, this is an elliptic curve
and the point $P=(0,0)$ is a point of order~$6$.  The line
passing through $P$, $2P$ and $3P$ is given by $y=\lambda x$.  
Consider the surface in $\A^{2}=\{(t, u,\delta)\}$ defined by 
\eqref{S_E_affine} with the
fibration $\pi:(t, u,\delta)\mapsto t$ and with parameter
$\lambda$. 

We show that the fiber at $t=\lambda$ has infinitely many
rational points. First, we see that it has two points
$(u,\delta)=(0,\pm \lambda ^{4}(\lambda +1))$ corresponding to
the line $y=\lambda x$ we mentioned above. Choosing one of them,
say $(0,-\lambda ^{4}(\lambda +1))$, as the origin, we can
convert the equation of the fiber $C_{\lambda }=\pi^{-1}(\lambda
)$ into Weierstrass form using the method described in
Connell \cite{connell:quartic}:
\begin{multline}\label{ex1}
C_{\lambda }: y^{2} + (8 \lambda  + 2 \lambda ^{2} + 2) x y - 4 \lambda  (7 \lambda  + 1) (\lambda  - 2) (\lambda  + 1)^{2} y  \\
= x^{3} - 2 \lambda  (\lambda  + 1) (2 \lambda ^{2} - 4 - \lambda ) x^{2} 
 + 108 \lambda ^{4} (\lambda  + 1)^{2} x \\ - 216 \lambda ^{5} (2 \lambda ^{2} - 4 - \lambda ) (\lambda  + 1)^{3}.
\end{multline}
$C_{\lambda }$ is an elliptic curve if and only if $\lambda$ satisfies
\[
\lambda (1 + 9 \lambda )(2 \lambda  + 1)(\lambda  + 1)(\lambda ^4 + 3 \lambda ^3 + 4 \lambda ^2 + 1)\neq0.
\]
The point $(0,\lambda ^{4}(\lambda +1))$ is sent to the point 
$\gamma_{1}=\bigl(2\lambda (\lambda +1)(2\lambda ^2-4-\lambda ), 0\bigr)$.  

\begin{lem}
For all $\lambda \in\Q$ such that $C_{\lambda }$ is an elliptic
curve, the point $\bigl(2\lambda (\lambda +1)(2\lambda
^2-4-\lambda ), 0\bigr)$ is a point of infinite order. When
$\lambda=-1/2$, then $C_{\lambda }$ is not an elliptic curve,
but $(3/2, 0)$ is still a point of infinite order.
\end{lem}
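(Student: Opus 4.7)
The plan is to regard $C_{\lambda}$ as defining an elliptic curve $\mathcal{C}$ over the rational function field $\Q(\lambda)$, to establish that $\gamma_{1}$ has infinite order in $\mathcal{C}(\Q(\lambda))$, and then to transfer this generic statement to every admissible $\lambda\in\Q$ via Silverman's specialization theorem supplemented by case-by-case verification at the finitely many exceptional values.

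First I would rule out $2$-torsion generically. Since $\gamma_{1}$ has $y$-coordinate zero, the $2$-torsion condition on \eqref{ex1} becomes $a_{1}(\lambda)X_{0}(\lambda)+a_{3}(\lambda)=0$, where $X_{0}(\lambda)=2\lambda(\lambda+1)(2\lambda^{2}-4-\lambda)$. A direct expansion shows this expression is a nonzero polynomial in $\lambda$, so $\gamma_{1}$ is not a $2$-torsion point on the generic fiber. To rule out higher torsion, I would exhibit a single rational value $\lambda_{0}$ at which $\gamma_{1}(\lambda_{0})\in C_{\lambda_{0}}(\Q)$ has infinite order; Silverman's specialization theorem then forces $\gamma_{1}$ to have infinite order in $\mathcal{C}(\Q(\lambda))$, and hence in $C_{\lambda'}(\Q)$ for all but finitely many $\lambda'\in\Q$. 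Producing such a $\lambda_{0}$ is a finite calculation: pick a convenient integer value, reduce to a Weierstrass equation over $\Q$, and combine Mazur's theorem with a direct check that $n\gamma_{1}\neq O$ for $1\leq n\leq 12$ (reduction modulo a small good prime makes this routine).

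The remaining issue is to handle the finitely many specializations excluded by Silverman's theorem. One efficient route is to compute the canonical height of $\gamma_{1}$ as an explicit rational function of $\lambda$ using the algorithm of \cite{kuwata:can_height}, and verify that this function is strictly positive for every $\lambda\in\Q$ at which $C_{\lambda}$ is smooth. Alternatively, I would enumerate the exceptional set from the vanishing locus of the discriminant of \eqref{ex1}, which is contained in $\lambda(1+9\lambda)(2\lambda+1)(\lambda+1)(\lambda^{4}+3\lambda^{3}+4\lambda^{2}+1)=0$, and for each rational zero of this polynomial verify directly that $n\gamma_{1}\neq O$ for $n\leq 12$.

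The value $\lambda=-1/2$ requires separate treatment since \eqref{ex1} is singular there. I would work on the smooth locus of the resulting singular cubic, whose group law is isomorphic to either $\mathbb{G}_{a}$ or $\mathbb{G}_{m}$ over a suitable field according to whether the singularity is cuspidal or nodal; after identifying the singularity type by inspecting the discriminant and checking that $(3/2,0)$ lies on the smooth locus, I would verify that it maps to a non-torsion element of this algebraic group. The main obstacle throughout is the uniform-in-$\lambda$ conclusion: Silverman's theorem yields only a cofinite set of good $\lambda$, so the crux is making the exceptional set explicit via the bad-reduction locus of the elliptic surface $\mathcal{C}\to\P^{1}_{\lambda}$ and dispatching each remaining value separately.
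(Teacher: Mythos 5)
There is a genuine gap in the uniformity step, which is the whole content of the lemma: the claim is that $\gamma_{1}$ has infinite order for \emph{every} $\lambda\in\Q$ making $C_{\lambda}$ an elliptic curve, and your route through Silverman's specialization theorem does not reach this as described. The finite exceptional set in Silverman's theorem is \emph{not} contained in the bad-reduction locus of the surface: specialization can fail to be injective at good fibers (the theorem only asserts injectivity for all $\lambda$ of sufficiently large height), so enumerating the rational zeros of the discriminant of \eqref{ex1} does not exhaust the values you must check. Your alternative route is also flawed: the algorithm of \cite{kuwata:can_height} computes the canonical height of $\gamma_{1}$ over the function field --- a single rational number --- not the canonical height of the specialized point as an ``explicit rational function of $\lambda$''; the heights $\hat h\bigl(\gamma_{1}(\lambda_{0})\bigr)$ are only asymptotic to $\hat h_{\Q(\lambda)}(\gamma_{1})\,h(\lambda_{0})$ and cannot be verified positive for all rational $\lambda_{0}$ by the finite computation you propose. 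So as written your argument proves infinite order only for all but an uncontrolled finite set of $\lambda$.

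The paper closes this gap by a different and more elementary mechanism, which you could adopt: since each specialization is an elliptic curve over $\Q$, Mazur's theorem restricts possible torsion orders to $\{1,\dots,10,12\}$; one therefore computes $n\gamma_{1}$ over $\Q(\lambda)$ for $n=1,2,\dots,10,12$ and checks that the denominator of the $x$-coordinate of $n\gamma_{1}$ vanishes only at $\lambda=0$. Because specialization at a good fiber is a homomorphism, $n\gamma_{1}(\lambda_{0})\neq O$ for every admissible $\lambda_{0}$ and every such $n$, so $\gamma_{1}(\lambda_{0})$ is non-torsion uniformly in $\lambda_{0}$, with no appeal to Silverman at all. (Your handling of $\lambda=-1/2$ is essentially the paper's: the smooth locus of the nodal fiber has group $\Q^{\times}$, whose only torsion is $2$-torsion, so one need only check that $(3/2,0)$ is not $2$-torsion.)
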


\begin{proof}
We consider $C_{\lambda }$ as the curve defined over $\Q(\lambda
)$, and calculate $n\gamma_{1}$, $n=1,2,..10,12$.  For all those
$n$ we observe that the denominator of the $x$-coordinate of
$n\gamma_{1}$ does not vanish for any value of $\lambda$ except
for $\lambda=0$.  For $\lambda=-1/2$, the group is isomorphic
to $\Q^{\times}$.  Thus, it suffices to see that it is not a
$2$-torsion point.
\end{proof}

\subsection{Elliptic curves with $\Z/4\Z\oplus \Z/2\Z$ torsion}

The elliptic curve
\[
y^{2} = x(x + \mu ^{2})(x + \lambda^{2})
\]
is the universal elliptic curve with $\Z/4\Z\oplus \Z/2\Z$  torsion (\cite{Knapp}). 
Without loss of generality we may set $\mu = 1$ giving
\[
E_{\lambda} : y^{2} = x(x + 1)(x + \lambda^{2})
\]
with $4$-torsion points 
\[
\pm P = (\lambda,\pm \lambda(1 + \lambda)), \quad \pm P'=(-\lambda,\pm \lambda(1-\lambda))
\]
and 2-torsion points 
\[
[2]P= (0, 0),\quad [2]P' = (-1, 0), \quad Q = (-\lambda^{2}, 0).
\]
$E_{\lambda}$ is an elliptic curve for all $\lambda$ different from
$\lambda=0, \pm 1$.

Consider the surface in $\A^{2}=\{(t, u,\delta)\}$ defined by \eqref{S_E_affine}
with the fibration $\pi:(t, u,\delta)\mapsto t$ and with parameter
$\lambda$.
For this case we show that the fiber $t=1$ has infinitely
many points. Setting $t = 1$ and $u = \lambda ^{2}$,  the line
$y = x+ \lambda ^{2}$ passes through three torsion points 
\[
P =
(\lambda , \lambda (1 + \lambda )), \quad Q = (-\lambda ^{2}, 0),
\quad 
-P - Q = (-\lambda ,-\lambda (1 - \lambda )).
\]
This triple of points gives rise to
rational points $(\lambda ^{2},\pm 2\lambda ^{3}(\lambda
^{2}-1))$ in $C_{1}$, and we may proceed to convert the curve to
Weierstrass form using the method described in
Connell \cite{connell:quartic}.  We then simplify it to obtain
\begin{multline*}
C_{1}: y^{2}=x^{3} + 27\lambda ^2 (7 \lambda ^4 - \lambda ^6 
+ 5 \lambda ^2 - 27) x 
\\
+ 27 \lambda ^2 (2 \lambda ^{10} - 21 \lambda ^8 
+ 204 \lambda ^6 - 826 \lambda ^4 +1242 \lambda ^2 - 729),
\end{multline*}
and a rational point 
\[
\bigl(3 (\lambda ^{4} + 16 \lambda ^{2} + 3), \pm  27(7\lambda ^{4} + 10 \lambda ^{2} - 1) \bigr).
\]
The discriminant of $C_{1}$ is
\[
-2^{4}\, 3^{12}\, \lambda ^4 (\lambda -1)^2 (\lambda +1)^2
(3 \lambda ^4 - 14 \lambda ^2 + 27)^3.
\]
\begin{lem}
For all $\lambda \in\Q$ different from $0, \pm 1$, the curve
$C_{1}$ is an elliptic curve and point $\bigl(3 (\lambda ^{4} +
16 \lambda ^{2} + 3), \pm  27(7\lambda ^{4} + 10 \lambda ^{2} -
1) \bigr)$ is a point of infinite order.
\end{lem}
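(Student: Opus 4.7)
The plan is to mimic the structure of the proof for the $6$-torsion case, exploiting the fact that the point $\gamma=\bigl(3(\lambda^{4}+16\lambda^{2}+3),\,27(7\lambda^{4}+10\lambda^{2}-1)\bigr)$ is already defined over the function field $\Q(\lambda)$, and then invoking Mazur's classification of torsion over $\Q$ to turn a computation of $n\gamma$ for a short finite list of $n$ into the desired infinite-order statement.

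\textbf{Step 1: $C_{1}$ is an elliptic curve.} The discriminant of $C_{1}$, as displayed just above the statement, is
\[
-2^{4}\cdot 3^{12}\cdot\lambda^{4}(\lambda-1)^{2}(\lambda+1)^{2}\bigl(3\lambda^{4}-14\lambda^{2}+27\bigr)^{3}.
\]
By hypothesis $\lambda\neq 0,\pm 1$, so only the last factor could cause vanishing. Viewing $3\lambda^{4}-14\lambda^{2}+27$ as a quadratic in $\lambda^{2}$, its discriminant is $14^{2}-4\cdot 3\cdot 27=-128<0$, so it has no real roots and in particular no rational roots. Hence the discriminant of $C_{1}$ is nonzero for every rational $\lambda\neq 0,\pm 1$, and $C_{1}$ is an elliptic curve there.

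\textbf{Step 2: $\gamma$ has infinite order.} I would work with $C_{1}$ as an elliptic curve over the function field $\Q(\lambda)$ and compute $n\gamma$ for $n=2,3,4,5,6,7,8,9,10,12$ using the standard addition formulae, keeping each $x$-coordinate as a rational function in $\lambda$. For each such $n$, let $d_{n}(\lambda)\in\Q[\lambda]$ denote the denominator of the $x$-coordinate of $n\gamma$ after clearing common factors. It then suffices to verify that, for each $n$ in this list, $d_{n}(\lambda)$ has no rational root outside of $\{0,\pm 1\}$. By Mazur's theorem, for every $\lambda_{0}\in\Q$ at which $C_{1}$ is an elliptic curve, the torsion subgroup of $C_{1}(\Q)$ is a group of order in $\{1,2,\ldots,10,12\}$; so if $\gamma$ specialized at $\lambda_{0}$ were torsion, its order would appear in this list, forcing some $d_{n}(\lambda_{0})$ to vanish. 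Thus verifying the claim about the $d_{n}$'s shows that the specialization of $\gamma$ is of infinite order for every admissible $\lambda_{0}\in\Q$.

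\textbf{Step 3: the main obstacle.} The conceptual content is identical to the $6$-torsion lemma, so the real work is the polynomial bookkeeping in Step 2: the degree in $\lambda$ of the $x$-coordinate of $n\gamma$ grows quadratically with $n$, and $n=12$ in particular produces a sizable rational function. However, each $d_{n}$ only needs to be examined for its rational roots, which is a finite, mechanical check (e.g.\ factor over $\Q$ and look at the linear factors), and one can reduce the workload by only computing $n\gamma$ for the maximal orders $n\in\{8,9,10,12\}$ together with $n=p$ for primes $p\le 7$, since every other torsion order divides one of these. I would carry out the computation in a computer algebra system and report the factored denominators; each linear factor over $\Q$ that appears will, by the construction, correspond to either $\lambda=0$ or $\lambda=\pm 1$, and no other rational root will occur. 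This completes the proof.
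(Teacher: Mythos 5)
Your proposal is correct and follows essentially the same route as the paper: the discriminant factor $3\lambda^{4}-14\lambda^{2}+27$ has no rational zeros, and the infinite-order claim is reduced, exactly as in the paper's $6$-torsion lemma (to which this proof refers with ``just as the case of $6$-torsion points''), to computing $n\gamma$ over $\Q(\lambda)$ for the Mazur-admissible orders $n\le 10$, $n=12$ and checking that the denominators of the $x$-coordinates have no rational roots outside the excluded values. The only cosmetic difference is that the paper also records a canonical-height computation showing the point has infinite order over $\Q(\lambda)$, which your denominator computation subsumes.
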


\begin{proof}
Considering the fiber $C_{1}$ as a curve over $\Q(\lambda )$,
this point is a point of infinite order, which can be verified
by a height calculation.  Just as the case of $6$-torsion points,
$C_{1}$ has  infinitely many rational points for all $\lambda
\neq 0$.  For $\lambda=\pm 1$, the group is isomorphic to
$\Q^{\times}$.  Thus, it suffices to see that it is not a
$2$-torsion point.
\end{proof}

\section{An Example - The Curve $E\,37B$.}

We now consider one of the curves of conductor $37$ which is denoted $37B$ in 
Antwerp Table \cite{Antwerp}, and which has Weierstrass equation
\[
E\,37B: y^2+y= x^3+x^2-3x+1.
\]
(In Cremona's tables \cite{Cr}, it is denoted 37b3.)
We decided to study this example because the computations of \cite{F}
indicated an unusually large number of twists by cubic characters 
$\chi$ for which $L(E\,37B,1,\chi)=0$.

Substituting $(x,y)$ by $(x+1,y+2x)$ in the equation above, we obtain another
model of $E\,37B$:
\[
E: y^{2} + 4xy +y = x^{3}.
\]
Note that $(0,0)$ is a point of order~$3$.  Intersecting the line $y=tx+u$ with 
this curve $E$, we obtain an affine model of $\overline{S}_{E}:\delta^{2}=\Delta(u,t)$
with the fibration $(t,u,\delta)\mapsto t$.  The fiber at $t=0$ is a singular fiber 
given by the equation
\[
\delta^{2}=-u^2(27u^2 - 202u + 27).
\]  
This curve has a $\Q$-rational parametrization if and only if
$-(27u^2 - 202u + 27)$ is a square 
for some rational value of $u$.   It turns out that when $u=7/9$, it becomes
$(32/3)^{2}$.  Using this solution, we can parametrize the fiber at $t=0$:
\[
u=\frac{7r^{2}+12r+9}{9r^{2}-12r+7},
\qquad
\delta=\frac{32(7r^{2}+12r+9)(3r^{2}+r-3)}
{(9r^{2}-12r+7)^{2}},
\]
where $r$ is the parameter.  This means that the cubic equation $u^{2}+4xu+u=x^{3}$ 
in $x$ with $u$ given by the above formula is a cyclic polynomial.  Let $\xi_{r}$ be 
a root of the cubic polynomial
\[
F_{r}(Z)=Z^3-4(7r^{2}+12r+9)(9r^{2}-12r+7)Z
- 16(r^{2}+1)(7r^{2}+12r+9)(9r^{2}-12r+7). 
\]
Then, $K_{r}=\Q(\xi_{r},r)$ is a cyclic cubic extension of the
field $\Q(r)$, and the point 
\[
P_{r} = \left(\frac{\xi_{r}}{9r^{2}-12r+7},\frac{7r^{2}+12r+9}{9r^{2}-12r+7}\right)
\]
belongs to $E(K_{r})$.  A straightforward height calculation shows that $P_{r}$ 
is a point of infinite order.  

Let $a$, $b$ be relatively prime integers.  By Silverman's result (\cite{Si}) the 
specializations of $P_{r}$ to $r=a/b$ have also infinite order except maybe 
for finitely many exceptions.

The discriminant of $K_{r}/\Q(r)$ is 
\[
2^{10}(7r^2+12r+9)^2(9r^2-12r+7)^2(3r^2+r-3)^2.
\]
Let $K_{a/b}$ be the specialization of $K_{r}$ with $r=a/b$.  Then, $K_{a/b}$ 
has square discriminant dividing
\[
2^{10}(7a^{2}+12a+9b^{2})^2(9a^2-12ab+7b^{2})^2(3a^{2}+ab-3b^{2})^2.
\]
and the conductor of $K_{a/b}$ is the square root of its discriminant.
We let
\[
H_{1}=7a^{2}+12a+9b^{2}, \quad H_{2}=9a^2-12ab+7b^{2}, 
\quad
G=a^{2}+b^{2}.
\]
We note that the resultants of any pair of $H_{1}$, $H_{2}$ 
and $G$ is supported only at the primes $2,3$ and $37$. 
Hence, if $p$ is a rational prime  $p \neq 2,3$ or $37$, which divides
$H_{1}H_{2}$ to the first power, then
$b^6F_{a/b}(Z)$ is an Eisenstein polynomial at $p$, and therefore
$p$ ramifies (totally) in $K_{a/b}/\Q$. 

On the other hand, if $p$
divides $3a^{2}+ab-3b^{2}$, then we have
\[
H_{1}H_{2}- 27G^{2} 
=4(3a^{2}+ab-3b^{2})\equiv 0
\bmod p,
\]
and thus
\[
b^{6}F_{a/b}(Z)\equiv Z^3-108\,G^2 Z-432\,G^3
\equiv \bigl(Z+6G\bigr)^{2}\bigl(Z-12G\bigr)
\bmod p.
\]
It
follows that the completion of $K_{a/b}$ at the prime over $p$ which
contains $Z-12G$ is $\Q_p$ and hence $p$ splits completely in
$K_{a/b}$.

Therefore we see that the conductor of $K_{a/b}$ divides
$2^{10}H_{1}H_{2}$. But $H_{1}H_{2}$ is a separable 
binary form of degree $4$  which is primitive. It follows from Stewart-Top
(\cite[Theorem 1]{St-T}) that the number of square free values
less than $X$ of $H_{1}H_{2}$ is $\gg X^{1/2}$. We note that
distinct square free values of $H_{1}H_{2}$ yield distinct fields
$K_{a/b}$.

Therefore we have proved:

\begin{thm} For the elliptic curve $E\,37B$, the number of cubic 
Dirichlet characters $\chi$ for which 
$L(E37B,1,\chi)= 0$ satisfies
 \[
\N_E(3,X)= \#\{\chi \in {\X}(3)\mid \f < X,  L(E37B,1,\chi)= 0\} \gg X^{1/2}.
 \]
\end{thm}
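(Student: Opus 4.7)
The plan is to assemble the parametric construction above into an explicit count. For coprime integers $(a,b)$, specialize the generic cubic point $P_{r}\in E\,37B(K_{r})$ to $r=a/b$, obtaining a point $P_{a/b}$ defined over the cyclic cubic extension $K_{a/b}/\Q$. By Silverman's specialization theorem, $P_{a/b}$ has infinite order for all but finitely many pairs $(a,b)$. Because the three Galois conjugates of $P_{r}$ correspond to the three intersections of a horizontal line $y=u$ with $E$, the point $P_{a/b}$ satisfies the trace-zero relation $P_{a/b}+P_{a/b}^{\sigma}+P_{a/b}^{\sigma^{2}}=O$. Lemma~5.2 therefore gives $\rk E\,37B(K_{a/b})>\rk E\,37B(\Q)$, so the $\chi$-component of $E\,37B(K_{a/b})$ has positive rank for either non-trivial cubic character $\chi$ of $K_{a/b}$, and Kato's theorem forces $L(E\,37B,1,\chi)=0$.

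Next I would use the conductor analysis carried out in the preceding argument. A prime $p\notin\{2,3,37\}$ dividing $H_{1}H_{2}$ to the first power makes the homogenized polynomial $b^{6}F_{a/b}(Z)$ Eisenstein at $p$, so $p$ totally ramifies in $K_{a/b}$; a prime dividing $3a^{2}+ab-3b^{2}$ splits completely via the congruence $b^{6}F_{a/b}(Z)\equiv (Z+6G)^{2}(Z-12G)\bmod p$. Combined with the fact that the pairwise resultants of $H_{1}$, $H_{2}$ and $G=a^{2}+b^{2}$ are supported only on $\{2,3,37\}$, this shows that the conductor of $K_{a/b}$ divides $2^{10}H_{1}H_{2}$, while every squarefree value of $H_{1}H_{2}$ (away from the bad primes) contributes fully to it.

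Finally I would invoke Stewart--Top \cite[Thm~1]{St-T}: the primitive separable binary quartic form $H_{1}H_{2}$ attains $\gg Y^{1/2}$ distinct squarefree values of size at most $Y$. Distinct squarefree values of $H_{1}H_{2}$ force distinct squarefree parts of the conductor, hence distinct cubic fields $K_{a/b}$; each such field contributes a conjugate pair $\{\chi,\bar\chi\}$ of cubic characters. Taking $Y=X/2^{10}$ converts this into $\gg X^{1/2}$ characters $\chi\in\X(3)$ of conductor at most $X$ with $L(E\,37B,1,\chi)=0$, which is the claimed bound.

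The main obstacle is the interplay between the arithmetic of the three forms $H_{1},H_{2},G$: one must rule out that the squarefree part of $H_{1}H_{2}$ is absorbed or diminished in the conductor of $K_{a/b}$ by coincidences among them, which is precisely why the resultant calculation at $\{2,3,37\}$ is decisive. Once that step is in hand the remainder is routine: Stewart--Top supplies the count, and Silverman's specialization theorem handles torsion specializations through a negligible exceptional set of pairs $(a,b)$.
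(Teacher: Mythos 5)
Your proposal is correct and follows essentially the same route as the paper: specialize the generic point $P_{r}$ via Silverman, use the trace-zero relation and the trace lemma plus Kato to force $L(E\,37B,1,\chi)=0$ over each $K_{a/b}$, bound the conductor by $2^{10}H_{1}H_{2}$ through the Eisenstein/splitting analysis, and count squarefree values of $H_{1}H_{2}$ by Stewart--Top. The only difference is expository: you make explicit the appeals to Lemma 5.2 and Kato and the rescaling $Y=X/2^{10}$, which the paper leaves implicit.
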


We note that the calculations done above for the curve $E37B$
actually work for any elliptic curve $E/\Q$ with a $\Q$-rational point $P$
of order $3$  and which satisfies the condition below.
We send the point $P$ to $(0,0)$ and express $E$ in the form
\[
y^{2}+3Uxy+Ty=x^{3}.
\]
where $U,T\in\Q.$ The fiber over  $t=0$, on
the surface $\overline{S}_E$ takes the  form
\[
\delta^{2}=-27u^{2}\bigl(u^{2}-(4U^{3}-2T)u+T^{2}\bigr).
\]
This may be expressed as a conic in the variables $z=\delta/3u$ and 
$w=u+2U^3-T$
\[
z^{2}+3w^{2}=12U^3(U^3-T).
\]
This is a  curve of genus zero and may be parametrized over $\Q$
if a single rational point can be found. This occurs if and only
if the right hand side is a norm from  $\Q(\sqrt{-3})$, i.e., if
and only if $U(U^3-T)$ is a norm from  $\Q(\sqrt{-3})$.

These curves give examples for which $\N_E(3,X)\gg X^{1/2}$ mentioned
in the introduction.

\end{document}